\newcommand{\lra}{{\longrightarrow}}
\newcommand{\cK}{{\mathcal K}}
\newcommand{\cC}{{\mathcal C}}
\newcommand{\cE}{{\mathcal E}}
\newcommand{\cS}{{\mathcal S}}
\newcommand{\cF}{{\mathcal F}}
\newcommand{\cG}{{\mathcal G}}
\newcommand{\cO}{{\mathcal O}}
\newcommand {\PP}{\mathbb{P}}
\newcommand{\cH}{{\mathcal H}}
\DeclareMathOperator{\Spl}{Spl}
\DeclareMathOperator{\rk}{rk}
\DeclareMathOperator{\Ext}{Ext}
\DeclareMathOperator{\Hom}{Hom}
\DeclareMathOperator{\coker}{coker}
\newtheorem{theorem}{Theorem}[section]
\newtheorem{lemma}[theorem]{Lemma}
\newtheorem{question}[theorem]{Question}
\newtheorem{problem}[theorem]{Problem}
\newtheorem{proposition}[theorem]{Proposition}
\newtheorem{corollary}[theorem]{Corollary}
 \theoremstyle{definition}
\newtheorem{definition}[theorem]{Definition}
\newtheorem{example}[theorem]{Example}
\newtheorem{remark}[theorem]{Remark}
\title[Matrices of constant rank]{Characterization of linear spaces of matrices of  constant rank from  syzygy bundles}
\author[S. Marchesi]{Simone Marchesi}
\address{Facultat de
  Matem\`atiques i Inform\`atica, Universitat de Barcelona, Gran Via des les
  Corts Catalanes 585, 08007 Barcelona, Spain}
\email{marchesi@ub.edu}
 \author[R.\ M.\ Mir\'o-Roig]{Rosa M.\ Mir\'o-Roig}
  \address{Facultat de
  Matem\`atiques i Inform\`atica, Universitat de Barcelona, Gran Via des les
  Corts Catalanes 585, 08007 Barcelona, Spain} \email{miro@ub.edu, ORCID 0000-0003-1375-6547}
\date{\today}
\subjclass{14J60, 15A30, 14F06}
\begin{document}

\begin{abstract}

    In this work, we characterize matrices of linear forms and constant rank, demonstrating that, under some natural assumptions, they are always associated with a syzygy bundle that fits into a (partially linear) resolution. Furthermore, this construction 
    allows us to list all indecomposable matrices of constant rank up to 7, as well as describing the moduli spaces of simple vector bundles naturally defined by families of constant rank matrices.
    %offers a new perspective that unifies the approaches used in known examples from the literature \red{, allow us to study their moduli spaces} and enables the creation of new ones,  as well as to list all indecomposable matrices of constant rank up to 7.
\end{abstract}

\maketitle

\section{Introduction}
The construction of linear spaces of matrices with constant rank is considered a classical problem; while its formulation is elementary, we are still far from achieving a complete understanding of it. Moreover, its interest is of great relevance, due to the several interpretations and ramifications into which this problem can be translated into.
Its origins can be traced back to the work of Kronecker and Weierstrass (see \cite{W} and \cite{W2}). As observed in the seminal works of Sylvester \cite{S} and Eisenbud and Harris  \cite{EH}, the search for families of constant matrices can be reformulated as a vector bundle problem.  
This perspective naturally connects the study of constant rank matrices to two independently significant topics: vector bundles \textit{generated by global sections} and \textit{uniform bundles}, both of which have been extensively studied in the literature. Indeed, every linear space of constant rank matrices (equivalently, a matrix $M$ with linear entries and constant rank) corresponds to a uniform vector bundle $\cE_M$, such that both $\cE_M$ and $\cE_M^\lor(1)$ are generated by global sections. Conversely, given a vector bundle $\cE$ with these properties, we can reverse the construction to obtain a matrix $M_\cE$ with linear entries and constant rank.
Once this relation is established, it is important to recall that the classification of rank $r$ uniform vector bundles on $\PP^n$ is known for $r \leq n+1$. Within this range, for $n \geq 3$, they are sums of line bundles and/or a twist of either the tangent bundle or the cotangent bundle.  

This result plays a crucial role in the classification of matrices of linear forms with
constant rank $r\leq 6$ done in \cite{MMR}. That work strongly relies on the classification theorems of globally generated vector bundles with small first Chern class (see \cite{ACM} and \cite{ACM2}) together with properties derived from being uniform (see for example \cite{MarMR}) and the study of homogeneous bundles presented in \cite{LM}.

Furthermore, the impact of constant rank matrices extends even further. The results in \cite{LM} have recently been applied to the study of border rank bounds for tensors that are invariant under the general linear group (see \cite{WU}).

Being aware of how deep-seated the problem is, it is now essential to understand the methods used so far to construct the few known examples. We find out that the points of view and techniques involved are quite different from one method to another. In particular, Boralevi, Faenzi and Mezzetti (see \cite{BFM}) consider instanton bundles and derived categories to shed light on an isolated example given by Westwick in \cite{W2}. Their work represents a foundational step toward a more general construction, later 
 developed by Boralevi, Faenzi, and Lella in \cite{BFL}. This subsequent approach, rooted in commutative algebra, is based on the connections between the linear resolution of truncated modules. Very recently, Landsberg and Manivel (see \cite{LM}) use representation theory, while Manivel and Miró-Roig (see \cite{MMR}) rely, as mentioned before, on the classification of globally generated vector bundles with small first Chern class.
 
 Here, we  explore the construction proposed by Boralevi, Faenzi and Lella in \cite{BFL} using a vector bundle point of view of their result (see Theorem \ref{mainthm}). Roughly speaking, a matrix with the required properties appears when relating linear parts of the chain complexes, whose indexes are paired 
 by the chain complex map. In addition, we prove that, under some weak assumption, any constant rank matrix $M$ with linear entries can be constructed using this technique (see Theorem \ref{THM}).
 
 However, the primary objective of this work is to emphasize the importance of the particular case in which the map between two considered resolutions is defined by a \textit{contraction of global sections}. The contractions we will need must be of \textit{maximal rank}. This observation directly connects our problem to another significant open question in commutative algebra. Indeed, determining the rank of a given contraction is equivalent to the long-standing open problem of computing the Hilbert function of the exterior algebra modulo a principal ideal generated by a generic form of degree $q$ (for more information, the reader can look at \cite{LN} and \cite{MS}). 
 
 As a first\ direct application of our \textit{construction by contractions}, we provide a complete list of the indecomposable matrices of constant rank 7 which is a novelty (see Theorem \ref{thm-rank7}).
 The arguments involved rely on the classification of globally generated vector bundles with first Chern class $c_1\le 3$ combined with relating complexes which arise as the resolution of the most basic examples of simple regular bundles: the Koszul complex, that represents the resolution of $\cO_{\PP^n}(1)$, and, more in general, the Eagon-Northcott complexes that are obtained as the resolution of $\cO_{\PP^n}(d)$, with $d\geq 2$. Our argument can also be used to provide a more direct proof of the classification of matrices of constant rank $r \le 6$ given in \cite{MMR}. Recall moreover that the first syzygy  bundles obtained by contracting global sections of $\cO_{\PP^n}(d)$ are known as \textit{Steiner bundles} when $d = 1$, and as \textit{Drézet bundles} when $d \geq 2$, both of which are of independent interest.

  As mentioned before, the classical complexes considered arise as the resolution of the most basic case of vector bundles which are \textit{regular} and \textit{simple}. In the last part of this work, we focus on families of vector bundles that satisfy, respectively and/or separately, these properties.
  
  First, we prove a general result that ensures the construction of constant rank matrices with linear entries starting from the syzygies of regular bundles (see Proposition \ref{prop-regmatrix}). Again, the central idea is to find an adequate contraction of the global sections of the regular bundle $\cF$ whose kernel still generates $\cF$.

Regarding simplicity, we provide a result which does not depend on a chosen contraction if a resolution is ensured. Indeed, it is known that if we start with a simple vector bundle (satisfying some open conditions in cohomology), the syzygy bundles obtained in the splitting of its resolution will also be simple. The importance of this observation is twofold. First of all, the simplicity of the syzygy bundle $\cE_M$ implies the indecomposability of the associate matrix $M$  of linear forms and constant rank. Furthermore, it allows to describe the family of  the vector bundles $\cE_M$  inside the moduli space of simple vector bundles with fixed rank and fixed Chern classes. We will show that they are  parametrized by an open dense subset of an irreducible component (see Theorem \ref{moduli}).

\vskip 4mm

The paper is organized as follows: in Section \ref{sec-notation} we set the notation and recall the notions which will be used throughout this work, with special emphasis on contractions. Section \ref{sec-contruction} contains the construction of matrices of linear forms and constant rank through morphisms of acyclic complexes and its reverse construction (see Theorem \ref{mainthm} and Theorem \ref{THM}, respectively). In Section \ref{sec-applications}, we focus on the constructions defined by a given contraction of global sections, which allow us to provide the complete list of the indecomposable matrices of constant rank 7 (see Theorem \ref{thm-rank7}) and that we also apply to regular bundles. In Section \ref{sec-moduli}, we parametrize the family of matrices with linear entries and constant rank which arise from the resolution of simple vector bundles, and their moduli space (see Theorem \ref{moduli}). Section \ref{sec-openpb} contains questions and  problems stemming from this work. Finally, in the Appendix, we will show how to construct, using Macaulay 2, two specific examples: one coming from the contraction associated to a Drézet vector bundles, and the other one obtained starting from the Horrocks-Mumfors bundle.

\medskip \noindent {\it Acknowledgements}.   Both authors have been partially supported by the grant PID2020-113674GB-I00.

\section{Notation and General set up}\label{sec-notation}

Throughout this paper we work over an algebraically closed field $k$ of characteristic 0. Given a $k$-vector space $V$ of dimension $n+1$, we denote by $V^*=\Hom(V,k)$ its dual, by $S(V)$ the symmetric algebra and by $\bigwedge V$ the exterior algebra. We fix $\{v_i\}$ a basis of $V$ and we denote by $\{v_i^*\}$ its associate dual basis.
We define  $R=S(V^*)$ and we identify it with a polynomial ring, i.e., $R=k[x_0,\cdots ,x_n]$. For any graded $R$-module $N$, we define $N_{\ge p}:=\oplus _{i\ge p}N_{i}$ its truncation at degree $p$. 
We define $\PP^n=\PP (V)$ as the set of lines, and therefore we have $H^0(\PP^n, \cO_{\PP^n}(1))=V^*$. Finally, considering the tangent bundle $T=T_{\PP^n}$, we set $\Omega ^j=\bigwedge ^j T^\lor$.

\subsection{Constant rank matrices and vector bundles}
We now describe the mutual relation between linear space of constant ranks and the associated vector bundle.
Given two $k$-vector spaces $A$ and $B$ of dimension $a$ and $b$, respectively, we consider a $(n+1)$-dimensional linear subspace $M$ of $A^*\otimes B\cong \Hom(A,B)$ and we say that $M$ has \textit{constant rank} $r$ if every non-zero element of $M$ has rank $r$. Fixing a basis for $A$ and $B$, we can write $M$ as a $(b\times a$)-matrix whose entries are linear forms in $n+1$ variables, that we keep denoting by the same letter $M$.  This matrix defines a natural map (which we still denote by $M$) that induces the following diagram
\begin{equation}\label{maindiagram}
\xymatrix{
0 \ar[r] & \cK_M \ar[r] & A\otimes \cO_{\PP^n}\ar[rr]^M\ar[rd] &
& B\otimes \cO_{\PP^n}(1) \ar[r] & \cC_M \ar[r] & 0  \\
& & & \cE_M \ar[ru]& && \\
}\end{equation}
\noindent where $\cK_M$, $\cC_M$ and $\cE_M$ denote, respectively, the kernel, cokernel and image of $M$. Being $M$ of constant rank $r$ is equivalent to say that $\cK_M$, $\cC_M$ and $\cE_M$ are vector bundles on $\PP^n$ of ranks $a-r$, $b-r$ and $r$ respectively.

We will say that $M$ is \textit{indecomposable} if the associated rank $r$ vector bundle $\cE _M$ on $\PP^n$ is indecomposable. To ensure this property, we will often deal with \textit{simple} vector bundle, i.e., whose only endomorphims are the
homotheties. Recall that any simple bundle is indecomposable but not vice versa. In Section \ref{sec-moduli}, we will discuss open cohomological conditions that ensure the simplicity, and thus the  indecomposability, of the vector bundles involved.

Directly from Diagram (\ref{maindiagram}), we have that $\cE_M$ and $\cE_M^\lor(1)$ are generated by global sections, which implies  that $\cE_M$ has the same splitting type for every line in $\PP^n $, i.e.,
$$
(\cE_M)_{|L}\simeq \cO_L(1)^{c_1(E)}\oplus \cO_L^{(r-c_1(E))} \mbox{ for every } L\subset \PP^n,
$$
in which case the vector bundle is called {\em uniform.}

Conversely, given a rank $r$ vector bundle $\cE$, such that $\cE_M$ and $\cE_M^\lor(1)$ are both globally generated, we can revert the previous construction through the constant rank $r$ natural morphism defined as
$$
\psi_\cE: H^0(\PP^n,\cE)\otimes\cO_{\PP^n}\longrightarrow H^0(\PP^n,\cE^\vee(1))^\vee\otimes\cO_{\PP^n}(1).$$

%\begin{definition}
%    Let $V_1^*\otimes \cO_{\PP^n} \xrightarrow{\,\,\,M\,\,\,} V_2\otimes \cO_{\PP^n}(1)$ be a matrix of constant rank $r$. We will say that $M$ is \textit{indecomposable} if the associated rank $r$ vector bundle $\cE _M$ on $\PP^n$ is indecomposable. 

 %   A vector bundle $\cE$ on $\PP^n$ is said to be \textit{simple} if the only endomorphims are the homotheties.  Any simple bundle is indecomposable but the reciprocal does not hold.
%\end{definition}

\begin{definition} \label{regular}
 A coherent sheaf $\cF$ on $\PP^n$ is called \textit{(Castelnuovo-Mumford) regular} if
$$
H^i(\cF(-i))=0 \:\:\mbox{ for any } \:\: i>0.
$$
We will say that $\cF$ is \textit{$m$-regular} if $\cF(m)$ is a regular sheaf and $\cF(m-1)$ is not. 
\end{definition}
As an  application of the Beilinson spectral sequence, we have the following resolution for any regular vector bundle $\cF$ on $\PP^n$ (see for example \cite[II.3.1]{OSS}).:
\begin{equation}\label{res-regsheaf}
\begin{array}{ccc}
0 \longrightarrow H^0(\cF(-1))\otimes \cO_{\PP^n}(-n) \longrightarrow H^0(\cF\otimes \Omega^{n-1}(n-1))\otimes \cO_{\PP^n}(-n+1) \longrightarrow \cdots\vspace{2mm}\\
\cdots \longrightarrow H^0(\cF\otimes \Omega^{p}(p))\otimes \cO_{\PP^n}(-p)\longrightarrow H^0(\cF\otimes \Omega^{p-1}(p-1))\otimes \cO_{\PP^n}(-p+1)\longrightarrow \cdots\vspace{2mm}\\
\cdots  \longrightarrow H^0(\cF\otimes \Omega^{1}(1))\otimes \cO_{\PP^n}(-1) \longrightarrow H^0(\cF)\otimes \cO_{\PP^n} \longrightarrow \cF \longrightarrow 0.
\end{array}
\end{equation}

\begin{example}\label{classic}  \rm
   Let $V$ be a $k$-vector space of dimension $n+1$ and $\PP^n=\PP(V)$. A classical example of morphism of constant rank appears in the Koszul complex
   $$
   \cdots \rightarrow \wedge ^{i+1}V^*\otimes \cO_{\PP^n} \xrightarrow{\,\,\,M\,\,}  \wedge ^{i}V^*\otimes \cO_{\PP^n}(1) \rightarrow  \cdots
   $$
   where $M$ is an indecomposable $\left(\binom{n+1}{i}\times \binom{n+1}{i+1}\right)$ matrix of constant rank ${n\choose i}$ and $\cK_M$, $\cE_M$ and $\cC_M$ are uniform bundles of ranks ${n\choose i+1}$,  ${n\choose i}$ and ${n\choose i-1}$, respectively.  Indeed, we have $\cK_M=\Omega ^{i+1}(i+1)$, $\cE_M=\Omega ^i (i+1)$ and $\cC_M=\Omega ^{i-1}(i+1)$.
\end{example}

\begin{remark}
    Throughout the paper we will often use, without mentioning it again, that if $\cC _M$ is simple and  $H^1(\cC_M(-1))=H^1(\cC_M^\lor(1))=H^2(\cC_M^\lor(1))=0$, then $\cE_M$ is also simple and $M$ indecomposable (for concrete details see \cite{FM}). 
\end{remark}

\subsection{Contractions}
We conclude this section by recalling the definition of contraction in an exterior algebra, which will play a central role in the explicit construction technique we aim to highlight in this work.

\begin{definition}\label{def-contraction}
For any integer $q \leq n+1$ and any $q$-form  $v_{j_1}^* \wedge \cdots \wedge v_{j_q}^* \in \bigwedge^q V^*$, we define two different \textit{contractions}:\\
    the \textit{left contraction}
    $$
    \begin{array}{rccl}
         \left((v_{j_1}^* \wedge \cdots \wedge v_{j_q}^*)\wedge - \right) : & \bigwedge^p V & \longrightarrow & \bigwedge^{p-q} V \vspace{1mm}\\
         & u & \mapsto &
         \left\{
         \begin{array}{cl}
             (-1)^{\sigma_u}w & \mbox{ if } u = (-1)^{\sigma_u}(v_{j_1} \wedge \cdots \wedge v_{j_q})\wedge w  \\
            0 & \mbox{ otherwise.}
         \end{array}
         \right.
    \end{array}
    $$
    and \textit{right contraction}
    $$
    \begin{array}{rccl}
         \left(- \wedge (v_{j_1}^* \wedge \cdots \wedge v_{j_q}^*)\right) : & \bigwedge^p V & \longrightarrow & \bigwedge^{p-q} V \vspace{1mm}\\
         & u & \mapsto &
         \left\{
         \begin{array}{cl}
             (-1)^{\sigma_u}w & \mbox{ if } u = (-1)^{\sigma_u} w \wedge (v_{j_1} \wedge \cdots \wedge v_{j_q})  \\
            0 & \mbox{ otherwise.}
         \end{array}
         \right.
    \end{array}
    $$
  By linearity, we extend both definitions to any  $q$-form $\omega\in \bigwedge^q V^*$. We will say that a $q$-form $\omega\in \bigwedge^q V^*$ defines a \textit{contraction of maximal rank} if  the associated linear map has maximal  rank for every $p=q,\ldots,n+1.$
\end{definition}
     
    Fixed $s\cdot t$ elements in $\bigwedge^q V^*$ we can define analogously the concept of contraction of maximal rank, relative to the linear map
    $$
    \left(\bigwedge^p V\right)^{ t}  \longrightarrow  \left(\bigwedge^{p-q} V \right)^{ s}
    $$
    induced by the fixed elements.
    Notice that the rank of a contraction on the left is maximal if and only if it is so for the paired contraction on the right.

The following example  provides us infinite families of maximal rank contractions.

    \begin{example}\label{ex-maxrank}
        If $n+1$ and $q$ are even integers, then the contraction defined by the generic element of $\bigwedge^q V^*$ is of maximal rank.   Indeed, we denote the basis of $V$ by $v_1,...,v_{2m}$.
 Since the $s$-th power of the non degenerated 2-form
        $$
        \omega=v_1^*\wedge v_{m+1}^*+v_2^*\wedge v_{m+2}^*+...+v_m^*\wedge
v_{2m}^*
        $$
        defines a contraction of maximal rank, the same holds for the generic $q$-form $\omega\in \bigwedge^q V^*$, as required.
 \end{example}
    
All the examples given in this work have maximal rank. Nevertheless, this is not always the case, as the following example, provided by Ottaviani and Weyman, shows.
\begin{example}\label{ex-notmaxrank} (See \cite[Remark 3.5]{GS})
Let $V$ be a 9-dimensional vector space. Then, the contraction $\bigwedge^6 V \longrightarrow \bigwedge^3 V$, defined by the generic $3$-form  $\omega$ in $\bigwedge^3 V^*$, is not of maximal rank. Indeed, it is represented by a $(84\times 84)$-matrix of rank $80$.
\end{example}

\section{Construction of matrices of constant rank}\label{sec-contruction}

In this section, we present the construction of matrices with linear entries and constant rank introduced in \cite{BFL} from a vector bundle perspective, which is particularly suitable for our purposes. We then examine whether this construction is reversible, that is, whether a constant-rank matrix can be obtained using this technique.
%The goal of this section is to introduce a useful method for constructing matrices, with linear entries and constant rank, that allows us both to build new examples and to recover the existing ones. As previously noted, this approach aligns with the construction given in \cite{BFL} while also introducing a novel perspective.

A \textit{bounded complex} $P_{\bullet}$ of coherent sheaves on $\PP^n$ is a sequence of coherent sheaves
  $$P_{\bullet}: \cdots \rightarrow P_{i-i}\xrightarrow{\,\,\,d_P^{i-1},\,\,}  P_i \xrightarrow{\,\,\,d_P^i\,\,\,}  P_{i+1}\rightarrow \cdots $$
such that, for any integer $i$, we have $d_P^id_P^{i-1}=0$ and $P_i=0$ for $i\gg 0$ and $i\ll 0$. We  set $p_i:=\rk(P_i)$.   The complex $P_{\bullet}$ is \textit{acyclic} if it is exact everywhere, i.e., $ker(d_P^i)=Im(d_P^{i-1})$ for all $i$ and 
\textit{linear} if   all differential maps
$d_P^i:P_i\rightarrow P_{i+1}$ can be represented by a $p_{i+1}\times p_{i}$ matrix with linear entries. We  say that $P_{\bullet}$ is \textit{$(r,s)$-linear} if $P_i=\cO_{\PP^n}(i)^{p_i}$ for any $r\le i \le s$, which implies that all differential maps
$d_P^i$, with $r\le i<s$, can be represented by a matrix with linear entries.
We denote by   $P[j]_{\bullet}$ the complex obtained by shifting the degree $j$ places to the left, i.e., $P[j]_i=P_{j+i}$.
We set $P_i(j):=P_i\otimes \cO_{\PP^n}(j)$ and 
$d^i_P(j):=d^i_P\otimes 1_{\cO_{\PP^n}(j)}$.

Without loss of generality, we can suppose our complexes to be non zero in degrees between $-n-1$ and $0$.

\begin{theorem}\label{mainthm}
     Let
  $P_{\bullet}$ and $Q_{\bullet}$ be two  acyclic  bounded complexes of vector bundles on $\PP^n$ and  let  $ \varphi_{\bullet }: P _{\bullet}
  \longrightarrow Q_{\bullet}(-j)[j] $ be a morphism 
  of complexes. We assume:
   %(i.e. a family of morphisms $\varphi_r:P_r\longrightarrow Q_{r+j}(-j)$ with $d_Q^{r+j}(-j)\varphi_{r}=\varphi_{r+1}d_P^{r}$). 
   \begin{itemize}
       \item[(i)]  $P_{\bullet}$ is $(r_P,s_P)$-linear and $Q_{\bullet}$ is $(r_Q,s_Q)$-linear.
       \item[(ii)] There exists an integer $i$, $r_P\le i <s_P$, such that
        \begin{itemize}
            \item $r_Q\le i+j<s_Q$;\vspace{1mm}
            \item the morphisms $$\varphi_i:P_i\longrightarrow Q_{i+j}(-j)$$ and $$\varphi_{i+1}:P_{i+1}\longrightarrow Q_{i+j+1}(-j)$$ are surjective.
       \end{itemize}
   \end{itemize}

Then, there exists a $ b_{i+2} \times (p_{i+1}-q_{i+j+1})$ matrix $M$, with $b_{i+2} := \dim \ker \varphi_{i+2}$, linear entries and constant rank $\sum _{k=0}^{-i-2}(-1)^{k} p_{i+2+k} - \sum _{k=0}^{-i-j-2}(-1)^{k} q_{i+j+2+k}$.
\end{theorem}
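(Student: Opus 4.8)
The plan is to build the matrix $M$ by splicing together the linear parts of the two complexes around the position where the morphism $\varphi_\bullet$ is surjective, and then to identify kernel, image and cokernel via a diagram chase. First I would pass to the mapping cone, or rather work directly: since $\varphi_i$ and $\varphi_{i+1}$ are surjective between the locally free sheaves $P_i, P_{i+1}$ and $Q_{i+j}(-j), Q_{i+j+1}(-j)$, their kernels $K_i := \ker\varphi_i$ and $K_{i+1} := \ker\varphi_{i+1}$ are again vector bundles, and $\varphi_\bullet$ restricts to give a (three-term, in the relevant range) subcomplex $K_\bullet \hookrightarrow P_\bullet$ with quotient $Q_\bullet(-j)[j]$ in degrees $i$ and $i+1$. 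The key point is that the differential $d_P^i : P_i \to P_{i+1}$ together with $\varphi$ produces a natural map from $K_i$ to $P_{i+1}$; composing with the projection $P_{i+1} \to P_{i+1}/(\text{image of } d_Q)$ does not quite work, so instead I would define $M$ as the composite
$$
\cO_{\PP^n}(i)^{\,p_{i+1}-q_{i+j+1}} \;\twoheadleftarrow\; K_{i+1}' \;\xleftarrow{\ d\ }\; \ker\varphi_{i+2} \;\hookrightarrow\; P_{i+2} ,
$$
reading off the linear presentation from the $(r_P,s_P)$-linearity. Concretely, in the linear range $P_k = \cO(k)^{p_k}$ and $Q_k = \cO(k)^{q_k}$, so $K_{i+1}$ sits in $0 \to K_{i+1} \to \cO(i+1)^{p_{i+1}} \to \cO(i+1)^{q_{i+j+1}} \to 0$, i.e. $K_{i+1}(-i-1)$ is the syzygy bundle of a $q_{i+j+1} \times p_{i+1}$ matrix of \emph{constants} (the evaluation of $\varphi_{i+1}$), which after a change of basis means $K_{i+1} \cong \cO(i+1)^{p_{i+1}-q_{i+j+1}}$; similarly for the neighbouring terms. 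Then the induced differential $\ker\varphi_{i+2} \to K_{i+1}$ is represented by a matrix with \emph{linear} entries, and this is the matrix $M$, with $p_{i+1}-q_{i+j+1}$ columns and $b_{i+2} = \dim\ker\varphi_{i+2}$ rows as claimed.

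Next I would verify the rank. By construction $M$ is (up to twist) the $(i+1)$-st differential of the complex $K_\bullet = \ker(\varphi_\bullet)$, which is acyclic: acyclicity follows from the snake lemma applied to $0 \to K_\bullet \to P_\bullet \to Q_\bullet(-j)[j] \to 0$, since $P_\bullet$ and $Q_\bullet(-j)[j]$ are both acyclic and $\varphi_\bullet$ is surjective in the two relevant degrees (one has to check that surjectivity at $i$ and $i+1$, plus acyclicity, forces the cone to be acyclic in the range that matters — this is where hypothesis (ii) is used essentially). Granting that $K_\bullet$ is acyclic, the rank of its differential $d_K^{i+1}$ at a point equals $\rk K_{i+1} - \rk\coker(d_K^{i+1}) = \rk K_{i+1} - (\rk K_{i+2} - \rk K_{i+3} + \cdots)$, and feeding in $\rk K_m = p_m - q_{m+j}$ (with the convention $q_\ell = 0$ outside the range of $Q$) gives exactly $\sum_{k\ge 0}(-1)^k p_{i+2+k} - \sum_{k\ge 0}(-1)^k q_{i+j+2+k}$ after re-indexing; the finiteness of the sums is guaranteed by the boundedness of the complexes (non-zero only in degrees $-n-1$ to $0$), which is why the upper limits $-i-2$ and $-i-j-2$ appear. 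In particular the rank is \emph{constant}, being the generic rank of a differential in an acyclic complex of vector bundles — that is precisely the statement that $\ker$, $\im$, $\coker$ of $d_K^{i+1}$ are all locally free.

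The main obstacle I anticipate is the bookkeeping needed to see that $K_\bullet$ is genuinely acyclic and $(r,s)$-linear \emph{in the relevant window} from the hypotheses, rather than just at the single spots $i, i+1$: one needs that $\varphi$ being a morphism of complexes propagates enough exactness, and that the non-surjectivity of $\varphi$ at degree $i+2$ (we only assume $\varphi_i,\varphi_{i+1}$ surjective) does not destroy the acyclicity of $\ker\varphi_\bullet$ — this should follow from the long exact sequence in homology of the cone together with the fact that the cone of a morphism between acyclic complexes is acyclic, independently of surjectivity, but stating it cleanly requires care. A secondary technical point is checking that the relevant differentials really do have \emph{linear} entries: this uses $(r_P,s_P)$- and $(r_Q,s_Q)$-linearity together with the containments $r_P \le i < s_P$ and $r_Q \le i+j < s_Q$ from hypothesis (ii), so that all of $P_i,P_{i+1},P_{i+2}$ and $Q_{i+j},Q_{i+j+1}$ are honest twisted structure sheaves and the maps between them (other than $\varphi_{i+1}$, which is constant after the base change) are linear. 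Everything else is routine diagram-chasing and Euler-characteristic accounting.
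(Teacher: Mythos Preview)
Your approach is essentially the paper's: both identify $M$ as the differential $\ker\varphi_{i+1}\to\ker\varphi_{i+2}$ in the kernel of $\varphi_\bullet$, after noting that these kernels are trivial bundles in the linear range. The one substantive difference is how constant rank is justified. You argue via acyclicity of the entire kernel complex $K_\bullet$, and correctly flag that this is delicate because $\varphi_\bullet$ is only assumed surjective at degrees $i$ and $i+1$; switching to the cone does not immediately help, since the cone and $K_\bullet$ differ precisely where $\varphi$ fails to be surjective. The paper avoids this altogether: it works with the syzygy bundles $E_s=\ker d_P^s$ and $G_s=\ker d_Q^s$, uses the snake lemma on the degree-$i$ and degree-$(i+1)$ squares to see that $E_{i+1}\to G_{i+j+1}(-j)$ and $E_{i+2}\to G_{i+j+2}(-j)$ are surjective, and then assembles a single $3\times 3$ diagram at degree $i+1$ whose left column
\[
0 \longrightarrow F_{i+1} \longrightarrow \ker\varphi_{i+1} \longrightarrow F_{i+2} \longrightarrow 0
\]
directly exhibits the image of $M$ as the bundle $F_{i+2}=\ker(E_{i+2}\to G_{i+j+2}(-j))$. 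So no global acyclicity of $K_\bullet$ is needed, only this local $3\times 3$ lemma, and the ``bookkeeping'' you anticipate dissolves.
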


\begin{proof}

For any integer $i$, with $-n+1\le i \le -1$,  we set $E_i:=Ker(d_P^i)=Im(d_P^{i-1})$ and $G_i=: Ker(d_Q^i)=Im(d_Q^{i-1})$, respectively.
Therefore, we can split the given complexes in exact sequences
%$$0 \longrightarrow E_{-1}  \longrightarrow  P_{-1}  \longrightarrow  P_0  \longrightarrow 0, $$
%$$0 \longrightarrow G_{-1}  \longrightarrow  Q_{-1}  \longrightarrow  Q_0  \longrightarrow 0, $$
$$
0 \longrightarrow E_{i}  \longrightarrow  P_{i}  \longrightarrow  E_{i+1}  \longrightarrow 0 %\text{ for } -n+1\le i \le -2; 
$$
$$
0 \longrightarrow G_{i}  \longrightarrow  Q_{i}  \longrightarrow  G_{i+1}  \longrightarrow 0 %\text{ for } -n+1\le i \le -2;
$$
%$$
 %0 \longrightarrow P_{-n-1}  \longrightarrow  P_{-n}  \longrightarrow  E_{-n+1}  \longrightarrow 0;  \text{ and } 
%$$
%$$
 %0 \longrightarrow Q_{-n-1}  \longrightarrow  Q_{-n}  \longrightarrow  G_{-n+1}  \longrightarrow 0
%$$
   which allow us to compute 
   $$e_i:=rk(E_i)=\sum _{k=0}^{-i}(-1)^{k}rk(P_{i+k})=\sum _{k=0}^{-i}(-1)^{k} p_{i+k}$$
   and  
   $$g_i:=rk(G_i)=\sum _{k=0}^{-i}(-1)^{k} q_{i+k}.
   $$ 
   The surjective morphisms $\varphi_i$ and $\varphi_{i+1}$ induce surjective morphisms
   $$ \widetilde{\varphi _i}:E_{i}\longrightarrow  G_{i+j}(-j)  \quad \text{ and } \quad 
   \widetilde{\varphi_{i+1}}:E_{i+1}\longrightarrow  G_{i+j+1}(-j).$$ 
   Setting $F_{s}:=ker(\widetilde{\varphi_s}:E_s\longrightarrow G_{s+j}(-j))$, consider the commutative diagram
$$
\xymatrix{
& 0 \ar[d] & 0 \ar[d] & 0 \ar[d] \\
0 \ar[r] & F_{i+1}(-i-1) \ar[d] \ar[r] & E_{i+1}(-i-1) \ar[d] \ar[r] & G_{i+1+j}(-i-j-1)\ar[d] \ar[r] & 0\\
0 \ar[r] & \cO_{\PP^n}^{p_{i+1}-q_{i+j+1}} \ar[r] \ar[d]  & P_{i+1}(-i-1) \ar[r] \ar[d] & Q_{i+1+j}(-i-j-1) \ar[r] \ar[d] & 0\\
0 \ar[r] & F_{i+2}(-i-1) \ar[r] \ar[d] & E_{i+2}(-i-1) \ar[d] \ar[r] & G_{i+j+2}(-i-j-1) \ar[r] \ar[d] & 0\\
& 0 & 0 & 0
}
$$

from which we can compute, for $i+1\le s\le i+2$, 
$$
f_s:=rk(F_s)=rk(E_s)-rk(G_{s+j})=
  \sum _{k=0}^{-s}(-1)^{k} p_{s+k} - \sum _{k=0}^{-s-j}(-1)^{k} q_{s+k+j},
  $$ and induce the following one

$$\xymatrix{
F_{i+1}(-i-1) \ar[rd] &&& &  F_{i+3}(-i-1) & \\
 & \cO_{\PP^n}^{p_{i+1}-q_{i+j+1}}\ar[rr]^M \ar[rd] &
&\cO_{\PP^n}(1)^{b_{i+2}}\ar[ru] & &\\
 & &  F_{i+2}(-i-1) \ar[ru]& && \\
 }$$
with $b_{i+2} = \dim \ker \varphi_{i+2}$. Thus the result follows.
\end{proof}

\begin{remark}
We do not know, in general, whether the matrix $M$ constructed in Theorem  \ref{mainthm} is indecomposable. Quite often the matrix $M$ will be associated to a syzygy bundle $F_{i+2}(-i-2):=\ker(\cO_{\PP^n}^{a_{i+2}} \longrightarrow   F_{i+3}(-i-2))$  of a simple bundle  $F_{i+3}(-i-2)$. Therefore, $F_{i+2}$ will also be simple and $M$ indecomposable.
\end{remark}

In the next section, we will introduce families of complexes that satisfy all the hypotheses of the above result and examine trickier constructions in detail. Meanwhile, we present a couple of examples to better illustrate the proposed construction.
\begin{example}\label{Firstexamples}
Consider the Koszul complex
  $$P_{\bullet}: 0\rightarrow P_{-n-1}\rightarrow P_{-n}\rightarrow \cdots \rightarrow P_{-2}\rightarrow P_{-1}\rightarrow P_0\rightarrow 0 $$ with 
  $P_i=\cO_{\PP^n}(i)^{{n+1\choose -i}}$.
  The hypotheses of Theorem \ref{mainthm} are satisfied for the following cases:\\

 \noindent   (i) Consider $Q_{\bullet} = 0$ and $j=0$. We recover the classical example of  an indecomposable $\binom{n+1}{i}\times \binom{n+1}{i+1}$ matrix with linear entries and  constant rank ${n\choose i}$ (see Example \ref{classic}).\\
   
  \noindent  (ii) Consider $P_{\bullet}=Q_{\bullet}$ and any pair of integers $i\leq -1$ and $j \geq 1$ that admit a maximal rank contraction (implying the relation $\binom{n+1}{-i}\ge \binom{n+1}{-i-j}$).
    In particular:
    
    \vskip 2mm
    
    - For $n=5$, $j=2$ and $i=-4 $, we recover the example  \cite[Example 6.2]{LM} and we construct a $14\times 14$ indecomposable matrix $M$ with entries  linear forms in 6 variables and constant rank 9 which gives rise to the following diagram:
$$\xymatrix{
0 \ar[r] &\cK_M \ar[r] &\cO_{\PP^5}^{ 14}\ar[rr]^M\ar[rd] &
& \cO_{\PP^5}(1)^{14} \ar[r] & \Omega ^1(3) \ar[r] & 0  \\
& & & \cE_M \ar[ru]& && \\
}$$
\noindent where $\cK_M:=ker(\Omega^3(3)\rightarrow \Omega^1(1))$ and  $\cE_M$ is a rank 9 uniform vector bundle on $\PP^5$.
\vskip 2mm
    - For $n=7$, $j=1$ and $i=-4$, we apply Theorem \ref{mainthm} and we build  a $28\times 20$ indecomposable matrix $M$ with entries  linear forms in 8 variables and constant rank 14 given by the following diagram:
$$\xymatrix{
0 \ar[r] & \cK_M\ar[r] &\cO_{\PP^7}^{ 28}\ar[rr]^M\ar[rd] &
& \cO_{\PP^7}(1)^{ 20} \ar[r] & \cC_M\ar[r] & 0  \\
& & & \cE_M \ar[ru]& && \\
}$$
\noindent where $\cK_M:=ker(\Omega^3(3)\rightarrow \Omega ^2(2))$ and  $\cC_M:=ker(\Omega^1(3)\rightarrow \cO_{\PP^7}(2))$.
In this case $\cE_M$ is a rank 14 uniform vector bundle on $\PP^7$.    
\end{example}

The last goal of this section is to prove that under some mild assumption any indecomposable matrix $M$ of constant rank $r$ which sits in a diagram  of type (\ref{maindiagram})
 can be constructed as in Theorem \ref{mainthm}.
 
Notice  that the rank $b-r$ cokernel vector bundle $\cC_M(-1)$ is generated by its global section. However, in general, $\cC_M(-1)$ is not 0-regular. To measure how far is $\cC_M (-1)$ from being 0-regular we introduce the following definition.
 
 \begin{definition}
     Let $\cE$ be a vector bundle on $\PP^n$ and fix an integer $p\ge 1$. We will say that $\cE$ has the \textit{property}   $L_p$ if it satisfies the following two conditions:
     \begin{itemize}
         \item[(i)] $\cE$ is globally generated; 
         \item[(ii)] the first $p$ steps of a minimal free $R$-resolution of $H^0_*(\cE)$ are linear, i.e., it has the following shape:
$$
\cdots \rightarrow R(-p+1)^{a_{p-1} }\rightarrow \cdots \rightarrow R(-1)^{a_1}\rightarrow R^{a_0 }\rightarrow H^0_*(\cE)
\rightarrow 0.
$$     \end{itemize}
 \end{definition}

\begin{theorem}\label{THM} Let $M$ be an indecomposable $b\times a$ matrix with linear entries in $n+1$ variables and constant rank $r$,  Assume that $\cC_M(-1)$ is $L_1$-linear. Then $M$ can be constructed as in Theorem \ref{mainthm}.
\end{theorem}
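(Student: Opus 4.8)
The idea is to run the construction of Theorem \ref{mainthm} in reverse, choosing both complexes $P_\bullet$ and $Q_\bullet$ to be the Beilinson-type resolutions \eqref{res-regsheaf} of suitable regular sheaves, paired by a morphism $\varphi_\bullet$ that at the relevant spot reproduces the given matrix $M$. Concretely, I would take $Q_\bullet$ to be (a shift of) the minimal free resolution of the truncation $H^0_*(\cC_M(-1))_{\geq 0}$, which by the $L_1$-hypothesis begins $\cdots \to \cO_{\PP^n}(-1)^{a_1}\to \cO_{\PP^n}^{a_0}\to \cC_M(-1)\to 0$, so that after the appropriate degree shift $Q_\bullet$ is $(r_Q,s_Q)$-linear with $s_Q - r_Q \geq 1$ and its last two nonzero terms are $\cO_{\PP^n}(i+j)^{q_{i+j}}$ and $\cO_{\PP^n}(i+j+1)^{q_{i+j+1}}$ fitting the hypothesis of Theorem \ref{mainthm}. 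For $P_\bullet$ I would take the analogous linear (Koszul-type / minimal) resolution whose relevant cohomology sheaf is $\cK_M$, respectively $\cE_M$; the point is that diagram \eqref{maindiagram} already exhibits $A\otimes\cO_{\PP^n}\xrightarrow{M} B\otimes\cO_{\PP^n}(1)$ with kernel $\cK_M$ and image $\cE_M$, and the exact sequence $0\to\cE_M\to B\otimes\cO_{\PP^n}(1)\to\cC_M\to 0$ twisted by $-1$ is exactly the kind of short exact sequence that appears as the bottom row of the big commutative diagram in the proof of Theorem \ref{mainthm}.

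\textbf{Key steps, in order.} First, I would fix the numerics: set $i$ and $j$ so that $\cO_{\PP^n}(i+1)^{p_{i+1}}\cong A\otimes\cO_{\PP^n}$ after twist (forcing $i+1$ to be the degree where $A$ sits) and so that the twisted sequence $0\to\cK_M\to A\otimes\cO_{\PP^n}\to\cE_M\to0$ and $0\to\cE_M\to B\otimes\cO_{\PP^n}(1)\to\cC_M\to0$ reassemble, after the degree shifts used in the theorem, into the two short exact columns $0\to F_s(-i-1)\to \cO_{\PP^n}^{p_{i+1}-q_{i+j+1}}\to\cdots$. Second, I would build $P_\bullet$: extend $A\otimes\cO_{\PP^n}$ on the left to a linear acyclic complex resolving $\cK_M$ — since $\cE_M$ and hence $\cK_M$ appear in \eqref{maindiagram}, and since $\cK_M$ is a bundle, regularity (Definition \ref{regular}) gives the Beilinson resolution \eqref{res-regsheaf}, which is linear after a twist; truncate/splice it so that it is $(r_P,s_P)$-linear over a range containing $i$ and $i+1$. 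Third, build $Q_\bullet$ from the $L_1$-resolution of $\cC_M(-1)$ as above. Fourth, define $\varphi_\bullet$: at degree $i+1$ it is the surjection $A\otimes\cO_{\PP^n}\twoheadrightarrow \cE_M \hookrightarrow B\otimes\cO_{\PP^n}(1)$ composed with $B\otimes\cO_{\PP^n}(1)\twoheadrightarrow\cC_M$; lift this to a morphism of complexes using the standard comparison theorem for resolutions (the target $Q_\bullet(-j)[j]$ is a complex of vector bundles, the source is acyclic, so lifts exist). Fifth, check surjectivity of $\varphi_i$ and $\varphi_{i+1}$: $\varphi_{i+1}$ is surjective because $\cC_M(-1)$ is globally generated, and $\varphi_i$ surjective because it can be arranged from the minimality of the chosen resolutions; verify the rank formula of Theorem \ref{mainthm} reduces to $r=\operatorname{rk}\cE_M$ by the Euler-characteristic bookkeeping already done in its proof. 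Finally, invoke Theorem \ref{mainthm} to produce $M'$, and identify $M'$ with $M$: the matrix extracted is precisely the map $\cO_{\PP^n}^{p_{i+1}-q_{i+j+1}}\to\cO_{\PP^n}(1)^{b_{i+2}}$ in the induced diagram, which by construction is the factorization of the original $M$ through $\cE_M$, hence has the same constant rank $r$ and (being indecomposable by hypothesis) is the same matrix up to equivalence.

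\textbf{Main obstacle.} The delicate point is not the existence of the lift $\varphi_\bullet$ (that is formal) but ensuring \emph{simultaneously} that both $\varphi_i$ and $\varphi_{i+1}$ are \emph{surjective at the sheaf level} while keeping both complexes linear in the required range $r_P\le i<s_P$, $r_Q\le i+j<s_Q$. The $L_1$-hypothesis on $\cC_M(-1)$ is exactly what guarantees one linear step of $Q_\bullet$ at the right spot (so $s_Q - r_Q \ge 1$), and the fact that $\cK_M$, $\cE_M$ are honest vector bundles makes $P_\bullet$ linear via \eqref{res-regsheaf}; but one must check that the degree shift $j$ aligning these two linear windows is consistent, i.e. that $i+j$ lands inside $[r_Q,s_Q)$. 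I expect this to come down to a short computation comparing the degree in which $A$ sits in $P_\bullet$ with the degree in which the generators $H^0(\cC_M(-1))$ sit in $Q_\bullet$, together with using that $\cC_M(-1)$ being globally generated forces $q_{i+j}$ to be exactly $a_0=h^0(\cC_M(-1))$ in the relevant normalization. Once that alignment is verified, everything else is the reverse reading of the diagram in the proof of Theorem \ref{mainthm}.
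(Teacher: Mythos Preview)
Your proposal has the architecture of Theorem~\ref{mainthm} in mind but misidentifies both complexes, and in doing so misses the one idea that actually makes the proof work.

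In the paper's argument the \emph{source} complex $P_\bullet$ is the (sheafified) minimal free resolution of $[H^0_*(\cC_M(-1))]_{\ge 0}$ --- not of $\cK_M$ --- and the \emph{target} complex $Q_\bullet$ is the resolution of the truncated module $[H^2_*(\cK_M(-1))]_{\ge 0}$. The morphism $\varphi_\bullet$ is induced by the connecting map $H^0(\cC_M(t))\twoheadrightarrow H^2(\cK_M(t))$ obtained by composing the two boundary maps of the $2$-step extension
\[
0\to \cK_M\to \cO_{\PP^n}^a\xrightarrow{M}\cO_{\PP^n}(1)^b\to \cC_M\to 0.
\]
Because the middle terms have no intermediate cohomology, this composition is surjective in the needed range; the $L_1$-hypothesis then forces the first step of the target resolution to be linear as well. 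A direct count gives $h^0(\cC_M(-1))=b+x$ and $h^0(\cC_M)=(n+1)b-a+y$ with $x=h^2(\cK_M(-1))$, $y=h^2(\cK_M)$, so the \emph{kernel} of $\varphi_\bullet$ in degrees $0$ and $-1$ is exactly $R^b\leftarrow R(-1)^a$, i.e.\ the matrix $M$ itself. That identification of $Q_\bullet$ with the resolution of a second-cohomology module is the whole point, and it does not appear anywhere in your plan.

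By contrast, your proposal swaps the roles of $P$ and $Q$, sets $Q_\bullet$ to be the resolution of $\cC_M(-1)$, and proposes $P_\bullet$ to be a Beilinson resolution of $\cK_M$. There is no reason for $\cK_M$ (or any twist of it) to be regular, so \eqref{res-regsheaf} is not available; and even granting some linear resolution, your description of $\varphi_{i+1}$ as ``$A\otimes\cO_{\PP^n}\twoheadrightarrow\cE_M\hookrightarrow B\otimes\cO_{\PP^n}(1)\twoheadrightarrow\cC_M$'' is a map to $\cC_M$, not to a free term of $Q_\bullet$, and its lift to $H^0(\cC_M(-1))\otimes\cO_{\PP^n}$ need not be surjective (indeed $h^0(\cC_M(-1))=b+h^2(\cK_M(-1))$ can exceed $a$). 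The alignment problem you flag as the ``main obstacle'' is therefore not the real obstacle: the real one is that you have not found a module to play the role of $Q$, and the answer is $H^2_*(\cK_M)$.
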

\begin{proof} Recall the standard  diagram, defined by $M$,
\begin{equation}\label{diagTHinverse}
\xymatrix{
0 \ar[r] & \cK_M \ar[r] &  \cO_{\PP^n}^{ a}\ar[rr]^M\ar[rd] &
& \cO_{\PP^n}(1)^{ b} \ar[r] & \cC_M \ar[r] & 0 . \\
& & & \cE_M\ar[ru]& && \\
}\end{equation}

 \noindent Since $H^i(\PP^n,\cO _{\PP^n}(t))=0$ for any integer $t$ and $0<i<n$, composing the boundary maps $H^0(\cC_M(t))\lra H^1(\cE(t))$ and $H^1(\cE(t))\lra H^2(\cK_M(t))$, in the associated long exact sequences of cohomology, we obtain, for any $t\geq -2$ (for any integer $t$ if $n\geq 3$), an epimorphism $$\mu_t:H^0(\PP^n,\cC_M(t))\lra H^2(\PP^n,\cK_M(t)).$$
We distinguish two cases:

\vskip 2mm
\noindent {\bf Case 1:} $H^2(\PP^n,\cK_M(-1))\ne 0. $ We consider minimal free $R$-resolutions $C_{\bullet }$ and $K_{\bullet}$ of the  truncated graded $R$-modules $[H^0_*(\cC_M(-1))]_{\ge 0}$  and $[H^2_*(\cK_M(-1))]_{\ge 0}$ as well as the morphism of complexes $\widetilde{\mu _{\bullet }} :C_{\bullet }\lra K_{\bullet} $ induced by $\mu _t$. Since $\cC_M(-1)$ is 1-linear and $\widetilde{\mu _t}$ is surjective for any integer $t$, we get that the first step of the resolution of $[H^2_*(\cK_M(-1))]_{\ge 0}$ is also linear.

Using the exact sequence (\ref{diagTHinverse}) we obtain
$$ h^0(\cC_M(-1))=b+h^2(\cK_M(-1))\:\: \text{ and } \:\: h^0(\cC_M)=(n+1)b-a+h^2(\cK_M).$$
Denoting $x:=h^2(\cK_M(-1))$ and $y:=h^2(\cK_M)$, we have the commutative diagram of graded $R$-modules

$$
\xymatrix{
& \vdots \ar[d] & \vdots \ar[d] & \vdots \ar[d] \\
0 \ar[r] & R(-1)^a \ar[d] \ar[r] & R(-1)^{(n+1)x+a-y} \ar[d] \ar[r] & R(-1)^{(n+1)x-y}\ar[d] \ar[r] & 0\\
0 \ar[r] & R^b \ar[r]   & R^{b+x} \ar[r] \ar[d] & R^x \ar[r] \ar[d] & 0\\
 &  & [H^0_*(\cC_M(-1))]_{\ge 0} \ar[d] \ar[r] & [H^2_*(\cK_M(-1))]_{\ge 0} \ar[r] \ar[d] & 0\\
&  & 0 & 0
}
$$
\noindent By sheafifying the above diagram, we obtain the desired result.
\vskip 2mm
\noindent {\bf Case 2:} $H^2(\PP^n,\cK_M(-1))=0. $  In this case $[H^2_*(\cK_M(-1))]_{\ge 0}=0$ and the result  immediately   follows.
\end{proof}

\begin{example}
    (i) As a first application of the previous result, we recover Example \ref{Firstexamples}-(ii). Consider the $14\times 14$ indecomposable matrix $M$ with entries  linear forms in 6 variables and constant rank 9 given by Landsberg and Manivel in  \cite[Example 6.2]{LM}. This matrix sits in the following exact sequence:
\begin{equation}\label{LM}
0
\lra \cK_M \lra \cO_{\PP^5}^{14} \xrightarrow{\,\,\,M\,\,} 
 \cO_{\PP^5}(1)^{14} \lra \Omega ^1(3) \lra 0  
\end{equation}

\noindent where $\cK_M:=ker(\Omega^3(3)\rightarrow \Omega^1(1))$. Being $\Omega^1(1)$ a 0-regular vector bundle, it satisfies the hypotheses of Theorem \ref{THM}. We can therefore recover the previous exact sequence applying Theorem \ref{mainthm} to the complexes $P_{\bullet}$ and $  Q_{\bullet}$, associated respectively to $H^0_*(\Omega ^1(2))_{\ge 0}$ and $H^2_*(\cK_M(-1))_{\ge 0}$, jointly with  the natural  epimorphism $H^0_*(\Omega ^1(2))_{\ge 0}\lra H^2_*(\cK_M(-1))_{\ge 0}$ coming from the 2-term extension depicted in (\ref{LM}).
\vskip 2mm
    (ii)    Let $\cF$ be an instanton bundle on $\PP^3$ with charge  4. In \cite[Theorem 6.1]{BFM}, Boralevi, Faenzi and Mezzetti construct an indecomposable  $14\times 14 $ matrix $M$ of linear forms and constant rank 12 that fits in the diagram \begin{equation}\label{instantondiagram}    
\xymatrix{
0 \ar[r] & \cF (-4) \ar[r] &  \cO_{\PP^3}^{14}\ar[rr]^M\ar[rd] &
&  \cO_{\PP^3}(1)^{14} \ar[r] & \cF(3) \ar[r] & 0  \\
& & & \cE_M\ar[ru]& && \\
}
\end{equation}
    In particular, this can be interpreted as a nontrivial 2-term extension $e\in \Ext^2(\cF(3),\cF(-4))$. Moreover, it corresponds to the matrix presented by Westwick in \cite{W2} without any geometric explanation. Once again, the hypotheses of Theorem \ref{THM} are satisfied in this case, confirming that it can be constructed using the method introduced in Theorem \ref{mainthm}. 
    
By composing the boundary maps in the cohomology long exact sequence of (\ref{instantondiagram}), we obtain, for any integer $t$, a surjective morphism 
$$
H^0(\PP^3,\cF(3+t))\longrightarrow H^2(\PP^3,\cF(-4+t))
$$ 
which induces an epimorphism of  graded $R$-modules 
$$\varphi: H^0_*(\cF)\longrightarrow H^2_*(\cF(-7)).$$
The matrix $M$ can be recovered by considering 
$P_{\bullet }$ and $Q_{\bullet }$ as the sheafifications of the minimal free $R$-resolutions of the truncated graded $R$-modules $H^0_*(\cF)_{\ge 3}$ and $H^2_*(\cF(-7))_{\ge 3}$, respectively. The chain complex map is then naturally induced by $\varphi$.\\
To construct the required resolutions, recall that, by \cite[Th\'eor\'eme 1.1]{R}, a general instanton $\cF$ on $\PP^3$ with charge 4 has the following  resolution
$$
0\rightarrow  \cO (-5)^{4}\rightarrow  \cO (-4)^{10}\rightarrow  \cO (-2)^{4}\oplus \cO (-3)^{4}\rightarrow \cF \rightarrow 0
$$
and its second cohomology group $H^2_*(\cF)$ has the following minimal graded free $R$-resolution (see \cite[Proposition 1]{D})
$$
0\longrightarrow R(-1)^{4}\longrightarrow  R ^{10}\longrightarrow  R(2)^{ 4}\oplus R(3)^{4}\oplus R(1)^{4}\longrightarrow  $$
$$R(4)^{10} \longrightarrow R(5)^{4}\longrightarrow H^2_*(\cF)\cong \Ext^4_R(H^1_*(\cF),R) \longrightarrow 0.$$
\end{example}

\section{Applications}\label{sec-applications}

In this section we focus on the particular case in which the construction described in Theorem \ref{mainthm} is defined by a contraction. This specific situation provides, as detailed in the introduction, several connections with different significant research topics and, in our opinion, represents a first step to study the questions proposed in the final section of this work.

Furthermore, it allows us to:
\begin{itemize}
    \item give a complete  list of the indecomposable matrices of constant rank 7 (see Section \ref{sec-rank7});
    \item describe a method to construct matrices with linear entries and constant rank starting from regular vector bundles (see Section \ref{sec-regularity}).
\end{itemize}
The first point strongly relies on the study of the relations between the classical complexes associated to standard determinantal ideals. 
That is why, in Section \ref{sec-classicalcomplexes}, we recall them for the sake of completeness (see \cite[Theorem A2.10]{Ei} for more details) and we also describe explicit examples that use them as main ingredient.

Regarding regularity, notice that the choice of starting with regular vector bundles is extremely natural since matrices with linear entries and  constant rank already arise as the matrices associated to the morphisms defining the linear resolution of a regular vector bundle. The subsequent and crucial step is to determine if it is possible to choose a vector subspace $W_0 \subset H^0(\cF)$ that still generates the vector bundle, i.e., $eval:W_0 \otimes \cO_{\PP^n} \longrightarrow \cF$ is surjective, 
and induces a commutative diagram that satisfies the hypotheses of Theorem \ref{mainthm}.

Throughout this section, we will suppose all the contractions to be of maximal rank. In particular, they are so in all the detailed examples, because of Example \ref{ex-maxrank} or by a direct computation made with Macaulay2.

\subsection{Classical complexes}\label{sec-classicalcomplexes}
Let  $R$ be a commutative ring and $N$ a free $R$-module. For any element $x \in N$, we construct its associated \textit{Koszul complex} $\eth_p:\bigwedge ^pN\lra \bigwedge ^{p-1}N$  as the dual of the complex  $\delta _q: \bigwedge^{q}N^* \longrightarrow \bigwedge^{q-1}N^*$ whose differentials $\delta _q$ are the contraction by $x$, i.e.,
 $$\delta _q(n_1\wedge \cdots \wedge n_q)=\sum _{i=1}^q (-1)^{i+1}n_i(x)n_1\wedge \cdots \wedge \hat{n_i}\wedge \cdots \wedge n_q.
 $$

Furthermore, given a morphism $\varphi : F \lra G$ between two free $R$-modules $F$ and $G$ of ranks $f$ and $g$,  respectively, we can construct complexes that arise as the ``strands'' of a Koszul complex and we obtain the following two resolutions:
$$
0 \rightarrow S_{f-g}G^* \otimes \bigwedge^{f} F \rightarrow S_{f-g-1}G^* \otimes \bigwedge^{f-1} F \rightarrow \cdots \rightarrow G^* \otimes \bigwedge^{g+1} F \rightarrow \bigwedge^g F \rightarrow R 
$$
known as the \textit{Eagon-Northcott complex}, and
$$
0 \rightarrow S_{f-g-1}G^* \otimes \bigwedge^{f} F \rightarrow S_{f-g-2}G^* \otimes \bigwedge^{f-1} F \rightarrow \cdots \rightarrow G^* \otimes \bigwedge^{g+2} F \rightarrow \bigwedge^{g+1} F \rightarrow F \rightarrow G 
$$
known as the \textit{Buchsbaum-Rim complex}. More in general, we have: 

$$
0 \rightarrow S_{f-g-i}G^* \otimes \bigwedge^{f} F \rightarrow S_{f-g-i-1}G^* \otimes \bigwedge^{f-1} F \rightarrow \cdots $$
$$\rightarrow G^* \otimes \bigwedge^{g+i} F \rightarrow \bigwedge^{i} F \rightarrow G\otimes \bigwedge ^{i-1}F \rightarrow \cdots \rightarrow S_{i-1}(G)\otimes F \rightarrow S_i( G).
$$

A clear description of the differentials in the complexes will help us to demonstrate the required commutativity in our construction. Therefore, by fixing sets of generators $\{f_i\}_{i=0}^{f-1}$ for $F$ and $\{g_j\}_{j=0}^{g-1}$ for $G$, and denoting the induced ones in  $G^*$ by $\{g_j^*\}_{j=0}^{g-1}$, we can describe 
\begin{equation}\label{def_delta}
\begin{array}{rccc}
\delta_{d,p}: & S_{d}G^* \otimes \bigwedge^{p} F & \longrightarrow & S_{d-1}G^* \otimes \bigwedge^{p-1} F\\
& (g^*_{j_1}\cdots g_{j_d}^*)\otimes f & \mapsto & \sum_{k=1}^d \left(g_{j_k}((g^*_{j_1}\cdots g_{j_d}^*))\right)\otimes\left(\varphi^*(g_{j_k}^*)(f)\right) 
\end{array}
\end{equation}

and 
\begin{equation}\label{def_gamma}
\begin{array}{rccc}
\gamma_{j,i}: & S_{j}G \otimes \bigwedge^{i} F & \longrightarrow & S_{j+1}G \otimes \bigwedge^{i-1} F\\
& m\otimes f & \mapsto & \sum_{i=0}^{g-1} g_im\otimes \varphi^*(g_i^*)(f). 
\end{array}
\end{equation}

\vskip 2mm
Notice that $\varphi^*(g_{j_k}^*)\in F^*$ acts on $\bigwedge F$.

\begin{example}\label{Ex-irrelevantideal}
If $R=k[x_0,\ldots,x_n]$ and $\varphi: F=R(-1)^f \longrightarrow G=R^g$ is defined by
\begin{equation}\label{mat-powerm}  
\left[
\begin{array}{ccccccccccccccccc}
x_0 & x_1 & \cdots & x_n & 0 & \cdots & 0 & 0 & 0\\
0 & x_0 & x_1 & \cdots & x_n & 0 & \cdots & 0 & 0\\
\vdots & & \ddots & & \ddots & & & & \vdots\\
0 & \cdots & 0 & 0 & x_0 & x_1 & \cdots & x_n & 0 \\
 0 & \cdots & 0 & 0 & 0 & x_0 & x_1 & \cdots & x_n 
\end{array}
\right],
\end{equation}
the Eagon-Northcott complex provides a resolution of the $g$-th power of the maximal   ideal $m=(x_0,\cdots ,x_n)$ of $R$.   
\end{example}

\subsubsection{Relating Koszul complexes} From now on, we   work either with the above complexes of graded $R$-modules or with the complexes in $\PP^n$ given by their sheafification.
Examples of complexes that satisfy Theorem \ref{mainthm} naturally show up  when we consider the following commutative diagram, which we define for vector bundles in $\PP^n$,
\begin{equation}\label{diagram1} 
\xymatrix{
\bigwedge^{p+1} V^* \otimes \cO_{\PP^n}(-1) \ar[r]^{-\wedge \omega} \ar[d]_\eth & \bigwedge^{q+1} V^* \otimes \cO_{\PP^n}(-1) \ar[d]^\eth\\
\bigwedge^{p} V^* \otimes \cO_{\PP^n} \ar[r]^{-\wedge \omega} & \bigwedge^{q} V^* \otimes \cO_{\PP^n}
}
\end{equation}
where $p \geq q$, $\eth$ denotes the Koszul derivation and $-\wedge \omega$ the contraction by a fixed element $\omega \in \bigwedge^{p-q}V$.

Diagram (\ref{diagram1}) induces a map
$\Omega^{p}_{\PP^n}(p) \longrightarrow \Omega^{q}_{\PP^n}(q)$. Indeed,  using the Koszul complex, we obtain
$$
\Hom(\Omega_{\PP^n}^{p}(p), \Omega_{\PP^n}^q(q))=H^0(\PP^n,(\bigwedge^{p-q}T_{\PP^n})(p-q))=\bigwedge^{p-q}V,
$$
which is again defined by contraction.
The composition of morphisms corresponds to multiplication in the exterior algebra $\bigwedge V$.

\begin{remark}
  Diagram (\ref{diagram1}) can be naturally generalized by fixing the appropriate number of elements in $\bigwedge^{p-q}V$, leading to a ``contraction''
    \begin{equation}\label{contraction-wedge}
    - \wedge \omega_{(s\times t)} : (\bigwedge^{p}V^*)^{t} \longrightarrow (\bigwedge^{q}V^*)^{s}
    \end{equation}
    that induces a morphism
  $$  - \wedge \widetilde{\omega}_{(s\times t)} : \left(\Omega_{\PP^n}^p(p)\right)^{t} \longrightarrow \left(\Omega_{\PP^n}^q(q)\right)^{s}.
    $$
  
    Hence we can associate, to the previous vector bundle morphism, a $s\times t$ matrix $A$ with entries in $\bigwedge^{p-q}V$.
    It holds that: $- \wedge \widetilde{\omega}_{(s\times t)}$ is pointwise surjective if and only if its dual map  $\left(- \wedge \widetilde{\omega}_{(s\times t)}\right)^*$ is pointwise injective if and only if  
    $$\wedge A: \left(\bigwedge ^{q}V\wedge \omega\right)^{s} \longrightarrow \left(\bigwedge ^{p}V\wedge \omega\right)^{t}$$ is injective for any point
      $\langle v\rangle \in \PP(V)$. 

      Furthermore, for any choice of the elements in $\bigwedge^{p-q}V$ that define a maximal rank contraction we have, whenever the dimension allows it, i.e., when
      $$
      t\binom{n+1}{p}\geq s\binom{n+1}{q},
      $$
      a surjetive map as required.

\end{remark}

\subsubsection{Strands of the Koszul complex}\label{sec-strands}

Other families of examples appear when considering different strands of the Koszul complex (or the same strand properly shifted).

Consider $R$ and $\varphi$ defined as in Example \ref{Ex-irrelevantideal}.
The map between the sheafified complexes, needed in Theorem \ref{mainthm}, can be made explicit through the following commutative diagrams. For simplicity, we will use the same notation for the vector spaces involved as for their corresponding modules.

First of all, we describe a diagram involving the ``left part'' of each strand, and therefore we consider
$$
\xymatrix{
S_{d}G^* \otimes \bigwedge^{p} F \otimes \cO_{\PP^n}(-1) \ar[d]_{\delta_{d,p}} \ar[r]^{\pi_{(d,b)}^{(p,q)}} & S_{b}G^* \otimes \bigwedge^{q} F \otimes \cO_{\PP^n}(-1) \ar[d]^{\delta_{b,q}} \\ 
S_{d-1}G^* \otimes \bigwedge^{p-1} F \otimes \cO_{\PP^n} \ar[r]^{\pi_{(d-1,b-1)}^{(p-1,q-1)}} & S_{b-1}G^* \otimes \bigwedge^{q-1} F \otimes \cO_{\PP^n}
}
$$
%$$
%\xymatrix{
%S_{d}G^* \otimes \bigwedge^{p} F \otimes \cO_{\PP^n}%(-1) \ar[d]_{\delta_{d,p}} \ar[r]^{\pi_{(d,b)}^{(p,q)}} & S_{b}G^* \otimes \bigwedge^{q} F \otimes \cO_{\PP^n}(-1) \ar[d]^{\delta_{b,q}} \\ 
%S_{d-1}G^* \otimes \bigwedge^{p-1} F \otimes \cO_{\PP^n} \ar[r]^{\pi_{(d-1,b-1)}^{(p-1,q-1)}} & S_{b-1}G^* \otimes \bigwedge^{q-1} F \otimes \cO_{\PP^n}
%}
%$$

\noindent for $p\geq q$, $d\geq b$ and $\delta_{d,p}$ denotes the derivations that appeared in the strand complexes. Observe that, in this case, we can describe the images by $\delta_{d,p}$ (which are defined in Formula (\ref{def_delta})) as
\begin{equation}\label{derivations-strand}
\begin{array}{cc}
\delta_{d,p}\left((g^*_{i_1}\cdots g^*_{i_d})\otimes (f_{j_1}\wedge \ldots \wedge f_{j_p})\right) =  \vspace{2mm}\\
\sum_{k=1}^d\left(\frac{g^*_{i_1}\cdots g^*_{i_d}}{g^*_{i_k}}\otimes \left(\sum_{t=1}^p (-1)^{t-1} x_{j_t-i_k}(f_{j_1}\wedge \ldots \wedge \hat{f_{j_t}}\wedge \cdots \wedge  f_{j_p})\right)\right).
\end{array}
\end{equation}

The maps $\pi_{(d,b)}^{(p,q)}$ are defined composing
$$
id \otimes (-\wedge \omega): S_{d}G^* \otimes \bigwedge^{p} F \otimes \cO_{\PP^n}(-1)  \longrightarrow S_{d}G^* \otimes \bigwedge^{q} F \otimes \cO_{\PP^n}(-1),
$$
constructed using the contraction given by a fixed element $\omega \in \bigwedge^{p-q}F^*$, with
\begin{equation}\label{proj-symmetric}
\begin{array}{rcccccccc}
\pi_{(d,b)}^G \otimes id : & S_{d}G^* \otimes \bigwedge^{q} F & \longrightarrow & S_{b}G^* \otimes \bigwedge^{q} F\\
& g^*_{i_1}\cdots g^*_{i_d} \otimes (f_{j_1}\wedge \cdots \wedge f_{j_q}) & \mapsto & \left(\sum_{\Delta \subset \{i_1,\ldots,i_d\}}g_\Delta\right) \otimes (f_{j_1}\wedge \cdots \wedge f_{j_q})
\end{array}
\end{equation}
where $\Delta$ varies to obtain all possible monomials $g_\Delta$, of degree $b$, from the set $\{g^*_{i_1},\ldots, g^*_{i_d}\}$.

Analogously, given the definition of $\gamma_{d,p}$ in Formula (\ref{def_gamma}), we can describe a diagram involving the ``right part'' of each strand, and describe
$$
\xymatrix{
S_{d}G \otimes \bigwedge^{p} F \otimes \cO_{\PP^n}(-1) \ar[d]_{\gamma_{d,p}} \ar[r]^{\pi_{(d,b)}^{(p,q)}} & S_{b}G \otimes \bigwedge^{q} F \otimes \cO_{\PP^n}(-1) \ar[d]^{\gamma_{b,q}} \\ 
S_{d+1}G \otimes \bigwedge^{p-1} F \otimes \cO_{\PP^n} \ar[r]^{\pi_{(d-1,b-1)}^{(p-1,q-1)}} & S_{b+1}G \otimes \bigwedge^{q-1} F \otimes \cO_{\PP^n}
}
$$
\begin{equation}\label{derivations-strand2}
\begin{array}{cc}
\gamma_{d,p}\left((g_{i_1}\cdots g_{i_d})\otimes (f_{j_1}\wedge \ldots \wedge f_{j_p})\right) =  \vspace{2mm}\\
\sum_{k=0}^{g-1}\left(g_{i_1}\cdots g_{i_d} \cdot g_k \otimes \left(\sum_{t=1}^p (-1)^{t-1} x_{j_t-k}(f_{j_1}\wedge \ldots \wedge \hat{f_{j_t}}\wedge \cdots \wedge  f_{j_p})\right)\right).
\end{array}
\end{equation}

\begin{remark}
The contraction used to define the derivation $\delta_{p,q}$ in the strand complex is a left one (see Definition \ref{def-contraction}). On the other hand, the maps $\pi_{(d,b)}^{(p,q)}$ are defined using a right contraction. This choice allows us to obtain the required commutative diagrams without the need to account for an alternate change of sign when defining $\pi_{(d,b)}^{(p,q)}$ . 
\end{remark}

\subsubsection{Mixed relations}
Another family of examples arises from commutative diagrams that relate the (sheafification of the) Koszul and Eagon-Northcott complexes. In particular, we will see in the following examples that such construction will also  be related to the so-called \textit{Drézet bundles} on $\PP^n$.

\begin{definition}
A  Drézet bundle on $\PP^n$ is a rank $r$ vector bundle $\cE$ fitting into a short exact sequence:
$$
0\lra \cO _{\PP^n}(-d)\lra \cO_{\PP^n}^{ r+1} \lra \cE_d \lra 0.
$$
\end{definition}
They are known to be stable and therefore simple. Moreover, the dual $\cE^{\vee}$ of a  Drézet bundle is a special case of the so-called syzygy bundle (\cite{FM}).

Consider again the  ring $R$ and $\varphi$ as in Example \ref{Ex-irrelevantideal}.
The map between the sheafified complexes, needed in Theorem \ref{mainthm}, is described by the commutative diagram that follows.\\
We consider
$$
\xymatrix{
S_{d}G^* \otimes \bigwedge^{p} F \otimes \cO_{\PP^n}(-1) \ar[d]_{\delta_{d,p}} \ar[r]^-{h_{(d,p,q)}^\omega} & \bigwedge^q V^* \otimes \cO_{\PP^n}(-1) \ar[d]^{\eth} \\ 
S_{d-1}G^* \otimes \bigwedge^{p-1} F \otimes \cO_{\PP^n} \ar[r]^-{h_{(d-1,p-1,q-1)}^\omega} & \bigwedge^{q-1} V^* \otimes \cO_{\PP^n}
}
$$
where $p\geq q$, $d\geq q$ and $\delta_{d,p}$, $\eth$ denote, respectively, the derivations of the Eagon-Northcott and the Koszul complexes. Recall that the explicit description of the images by $\delta_{d,p}$ is given by Formula (\ref{derivations-strand}).

The maps $h_{(d,p,q)}^\omega$ are constructed combining $(id \otimes (-\wedge \omega))$, which is induced by a maximal rank contraction by a fixed element $\omega \in \bigwedge^{p-q}F^*$,
with 
$$
\xymatrix{
S_{d}G^* \otimes \bigwedge^{q} F \otimes \cO_{\PP^n}(-1) \ar[d]_{\delta_{d,q}} \ar[r]^-{h_{(d,q)}} & \bigwedge^q V^* \otimes \cO_{\PP^n}(-1) \ar[d]^{\eth} \\ 
S_{d-1}G^* \otimes \bigwedge^{q-1} F \otimes \cO_{\PP^n} \ar[r]^-{h_{(d-1,q-1)}} & \bigwedge^{q-1} V^* \otimes \cO_{\PP^n}
}
$$
When $d=q$, we set
$$
h_{q,q}\left( (g^*_{i_1}\cdots g^*_{i_q})\otimes (f_{j_1}\wedge \ldots \wedge f_{j_q})\right) = \sum_{\sigma \in \mathfrak{S}_q}\left(v^*_{j_1 - \sigma(i_1)} \wedge \cdots \wedge v^*_{j_q - \sigma(i_q)}\right)
$$
using the convention that $v^*_{j-i} = 0$ if $0 > j-i > n$. 

For any $d \leq q$ we define $h_{(d,q)} = h_{(q,q)} \circ \pi_{(d,q)}^G$, where $\pi_{(d,q)}^G$ is defined as in (\ref{proj-symmetric}).

\begin{remark}    
The described construction can be generalized if we consider copies of the complexes involved. Indeed, fixing elements $\omega_{(1,1)}, \ldots, \omega_{(t,s)} \in \bigwedge^{p-q}F^*$, it is possible to define
$$
\left(S_{d}G^* \otimes \bigwedge^{q} F\right)^{t} \otimes \cO_{\PP^n} \stackrel{h_{(d,q,t,s)}^{\omega_{(i,j)}}}{\longrightarrow}  \left(\bigwedge^q V^*\right)^{s} \otimes \cO_{\PP^n}
$$
and, for a sufficiently independent choice of them combined with a proper dimensional range, they will satisfy the hypotheses of Theorem \ref{mainthm}.
\end{remark}

The following proposition tells us how many monomials of degree $d$ can be excluded to still obtain a $(-q,-1)$-linear resolution.
More concretely, for which values of $\alpha$ we have a commutative diagram
\begin{equation}\label{diag-monomialcut}
\xymatrix{
 & 0 \ar[d] & 0 \ar[d] &  \\
 0 \ar[r] & \cF \ar[d] \ar[r] & \cE^\lor_d \ar[r] \ar[d] & \cO_{\PP^n}^\alpha \ar[d]^\simeq \ar[r] & 0\\
 0 \ar[r] & \cO_{\PP^n}^{\binom{n+d}{d} -\alpha} \ar[d] \ar[r] & \cO_{\PP^n}^{\binom{n+d}{d}} \ar[r] \ar[d] & \cO_{\PP^n}^\alpha \ar[r] & 0 \\
 & \cO_{\PP^n}(d) \ar[r]^{\simeq} \ar[d] & \cO_{\PP^n}(d) \ar[d]\\
 & 0 & 0
}
\end{equation}
that leads to a resolution of type
$$
\cdots \rightarrow P_{-q-1} \rightarrow P_{-q} \rightarrow \cdots \rightarrow P_{-1} \rightarrow \cF \rightarrow 0
$$
with $P_i = \cO_{\PP^n}(i)^{p_i}$ for $-q\leq i \leq -1$. This resolution translates into having families of  matrices with linear entries and constant rank.

\begin{proposition} \label{drezet} 
    Given integers $d\geq 2$,  $n\ge 2$ and   $s>0$, we consider the maximal $q \geq 2$ such that the generic elements $\omega_1,\ldots,\omega_s \in \bigwedge^{d}F^*$ define a maximal rank contraction and the relation 
    \begin{equation}\label{copies-surj} 
    \binom{d+q-1}{q} \cdot \binom{n+d}{q+d} \geq s \binom{n+1}{q}
    \end{equation}
    is satisfied. Then, for any $i = 2,\ldots,q$, we have  an indecomposable matrix $M_{i-1}$, of dimension
    $$
    \left(\binom{d+i-3}{i-2}\binom{n+d}{i-2+d} -s \binom{n+1}{i-2}\right) \times \left(\binom{d+i-2}{i-1}\binom{n+d}{i-1+d} -s \binom{n+1}{i-1}\right)
    $$
    of linear entries and constant rank 
    $$
    \left(\sum_{k=0}^{i-2} (-1)^k \left(\binom{d+i-3-k}{i-2-k}\binom{n+d}{i-2+d-k} -s \binom{n+1}{i-2-k}\right)\right) + (-1)^{i-1}.
    $$
\end{proposition}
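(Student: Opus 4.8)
The plan is to realize the matrices $M_{i-1}$ as the successive, suitably twisted, differentials of the $(-q,-1)$-linear resolution of the trimmed Drézet bundle $\cF$ appearing in diagram (\ref{diag-monomialcut}), and to obtain that resolution from Theorem \ref{mainthm} by feeding it the Eagon--Northcott complex and $s$ copies of the Koszul complex, linked through the ``mixed relations'' contractions of this section.

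I would take $F=\cO_{\PP^n}(-1)^{\,n+d}$, $G=\cO_{\PP^n}^{\,d}$ and $\varphi\colon F\to G$ the morphism from Example \ref{Ex-irrelevantideal}. Then $\cE_{d}^{\lor}$ is the kernel of the evaluation $\bigwedge^{d}F(d)\cong\cO_{\PP^n}^{\,\binom{n+d}{d}}\to\cO_{\PP^n}(d)$ of the degree-$d$ monomials, and the tail of the Eagon--Northcott complex of $\varphi$, twisted by $\cO_{\PP^n}(d)$, sheafifies to an acyclic $(-n,-1)$-linear complex $P_{\bullet}$ with
$$
P_{0}=\cE_{d}^{\lor},\qquad P_{-k}=S_{k}G^{*}\otimes\bigwedge^{k+d}F(d)=\cO_{\PP^n}(-k)^{\binom{d+k-1}{k}\binom{n+d}{k+d}}\quad(1\le k\le n).
$$
For $Q_{\bullet}$ I would take $s$ copies of the Koszul complex on $\PP^n$, which is acyclic and linear with $q_{-k}=s\binom{n+1}{k}$, and for $\varphi_{\bullet}\colon P_{\bullet}\to Q_{\bullet}$ the chain map assembled from the mixed-relations diagrams, one for each of the generic forms $\omega_{1},\dots,\omega_{s}\in\bigwedge^{d}F^{*}$, so that $\varphi_{-k}$ is the contraction $\cO_{\PP^n}(-k)^{\binom{d+k-1}{k}\binom{n+d}{k+d}}\to\cO_{\PP^n}(-k)^{s\binom{n+1}{k}}$ induced by $\omega_{1},\dots,\omega_{s}$.

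Next I would verify the hypotheses of Theorem \ref{mainthm} with $j=0$ and with its integer parameter chosen equal to $-i$, for each $i\in\{3,\dots,q\}$: hypothesis (i) is the linearity just recorded, and $r_{P}\le -i<s_{P}$, $r_{Q}\le -i<s_{Q}$ follow from $3\le i\le q\le n$. The substantive point is the surjectivity of $\varphi_{-i}$ and $\varphi_{-i+1}$. Since the $\omega_{j}$ define a contraction of maximal rank (Definition \ref{def-contraction}, for $s$ copies, where left- and right-maximality coincide), $\varphi_{-k}$ is surjective precisely when $\binom{d+k-1}{k}\binom{n+d}{k+d}\ge s\binom{n+1}{k}$; by the definition of $q$ and (\ref{copies-surj}) this holds for all $1\le k\le q$, the function $k\mapsto\binom{d+k-1}{k}\binom{n+d}{k+d}-s\binom{n+1}{k}$ being non-increasing on $[1,n]$, and for $k=0$ it yields that $\varphi_{0}\colon\cE_{d}^{\lor}\to\cO_{\PP^n}^{\,s}$ is surjective with kernel $\cF$. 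Theorem \ref{mainthm} then produces, for each such $i$, a matrix $M_{i-1}$ with linear entries of size $b_{-i+2}\times(p_{-i+1}-q_{-i+1})$ and constant rank $\sum_{l\ge0}(-1)^{l}p_{-i+2+l}-\sum_{l\ge0}(-1)^{l}q_{-i+2+l}$; substituting $p_{-m}=\binom{d+m-1}{m}\binom{n+d}{m+d}$ and $q_{-m}=s\binom{n+1}{m}$, with the convention $\binom{d-1}{0}=1$, turns these into the size and constant rank of the statement, the discrepancy between $p_{0}=\rk\cE_{d}^{\lor}=\binom{n+d}{d}-1$ and the value $\binom{n+d}{d}$ given by that convention being exactly what produces the closing summand $(-1)^{i-1}$. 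The boundary case $i=2$ is not covered directly, since then $P_{0}=\cE_{d}^{\lor}$ lies outside the linear range; instead $M_{1}$ is the outermost differential of the resolution, namely the composite $\cO_{\PP^n}(-1)^{\,p_{-1}}\twoheadrightarrow\cF\hookrightarrow\cO_{\PP^n}^{\,\binom{n+d}{d}-s}$ of (\ref{diag-monomialcut}) twisted by one, and the same substitution gives size $(\binom{n+d}{d}-s)\times(d\binom{n+d}{d+1}-s(n+1))$ and constant rank $\binom{n+d}{d}-s-1$.

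Finally, for indecomposability I would argue as in the remark following Example \ref{classic}: for a generic choice of the $\omega_{j}$ the trimmed Drézet bundle $\cF$ is simple --- being, up to a twist, a generic kernel bundle of a globally generated line bundle, hence stable --- and simplicity together with the relevant cohomological vanishings is inherited by all the syzygy bundles in its linear resolution, which are precisely the $\cE_{M_{i-1}}$, up to twist, along with their successive kernels and cokernels; so by \cite{FM} each $\cE_{M_{i-1}}$ is simple and $M_{i-1}$ indecomposable. The step I expect to be the main obstacle is the middle two paragraphs: checking that the mixed-relations squares patch into an honest chain map $P_{\bullet}\to Q_{\bullet}$ of the full complexes, and that the maximal-rank hypothesis truly forces $\varphi_{-k}$ to be surjective throughout $1\le k\le q$ --- which needs the monotonicity of the numerical function above and a careful treatment of the extreme indices $k=0$ and $k=q$. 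What remains afterwards is routine bookkeeping with binomial identities.
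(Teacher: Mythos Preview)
Your approach is the paper's: both feed the Eagon--Northcott complex (for $P_{\bullet}$) and $s$ copies of the Koszul complex (for $Q_{\bullet}$) into Theorem \ref{mainthm} via the mixed-relations contractions $h^{\{\omega_j\}}$, deduce surjectivity of the horizontal maps from (\ref{copies-surj}), and read off the size and rank of $M_{i-1}$ from the resulting three-row diagram.

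One correction: the justification you give for surjectivity at all levels $1\le k\le q$ is that the \emph{difference} $k\mapsto\binom{d+k-1}{k}\binom{n+d}{k+d}-s\binom{n+1}{k}$ is non-increasing on $[1,n]$, and this is false in general (take $d=2$, $n=5$, $s=1$: the values at $k=1,2$ are $64$ and $90$). What is true, and what suffices, is that the \emph{ratio} $\binom{d+k-1}{k}\binom{n+d}{k+d}\big/\binom{n+1}{k}$ is non-increasing --- the quotient of consecutive terms equals $\dfrac{(d+k)(n-k)}{(k+d+1)(n+1-k)}<1$ --- so that (\ref{copies-surj}) at $k=q$ forces it at every smaller $k$; the paper asserts this implication without proof as well. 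Your separate handling of $i=2$ and your sketch of indecomposability go slightly beyond what the paper's own proof provides.
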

\begin{proof}
The inequality described in (\ref{copies-surj}) ensures the surjectivity of the map
\begin{equation}\label{drezet-surj}
h_{(q,d+q,q)}^{\{\omega_j\}_{j=1}^s} : S_q G^* \otimes \bigwedge^{q+d} F \otimes \cO_{\PP^n}(-q) \longrightarrow \left(\bigwedge^q V^* \otimes \cO_{\PP^n}(-d) \right)^{ s}
\end{equation}
constructed taking  $\omega_1,\ldots,\omega_s \in \bigwedge^{d}F^*$. Furthermore, if (\ref{copies-surj}) holds, then
$$
 \binom{d+q-1-t}{q-t} \cdot \binom{n+d}{q+d-t} \geq s \binom{n+1}{q-t}
 $$
 holds as well for any $i=0,\ldots,q$. This means that we get surjective maps $H_{q-j}:= h_{(q-j,d+q-j,q-j)}^{\{\omega_j\}_{j=1}^s}$, defined analogously as in (\ref{drezet-surj}) for any $i=0,\ldots,q$, that provide the following commutative diagram ($\cO=\cO_{\PP^n}$) 
 $$
 \xymatrix{
W_i \otimes \cO(-i) \ar[r] \ar[d] &S_i G^* \otimes \bigwedge^{i+d} F \otimes \cO(-i) \ar[r]^-{H_i} \ar[d] & \left(\bigwedge^i V^* \otimes \cO(-i) \right)^{ s}\ar[d]\\
W_{i-1} \otimes \cO(-i+1) \ar[r] \ar[d]_{M_{i-1}} &S_{i-1} G^* \otimes \bigwedge^{i-1+d} F \otimes \cO(-i+1) \ar[r]^-{H_{i-1}} \ar[d] & \left(\bigwedge^{i-1} V^* \otimes \cO(-i+1) \right)^{ s}\ar[d]\\
W_{i-2} \otimes \otimes \cO(-i+2) \ar[r]  &S_{i-2} G^* \otimes \bigwedge^{i-2+d} F \otimes \cO(-i+2) \ar[r]^-{H_{i-2}} & \left(\bigwedge^{i-2} V^* \otimes \cO(-i+2) \right)^{ s}\\
 }
 $$
 for any $i=2,\ldots,q$.
 
 Once again, Theorem \ref{mainthm} ensures that $M_{i-1}$ is a matrix, of linear entries and constant rank, of the required dimension. Finally, the rank is computed iteratively starting from the rank of $M_1$ which is equal to $\binom{n+d}{d} - s.$ 
\end{proof}

\begin{remark}
(1) 
The proposed technique strongly relies on the choice of an element $\omega \in \bigwedge^{p-q} F^*$, that induces a contraction defined as
\begin{equation}\label{contraction-pq}
S_{d}G^* \otimes \bigwedge^{p} F  \stackrel{id \otimes (-\wedge \omega)}{\longrightarrow} S_{d}G^* \otimes \bigwedge^{q} F.
\end{equation}
These maps are in a one to one correspondence with maps over the vector bundles obtained from the respective splitting of the two complexes. In fact, let
$$
K_{d,p}:= \ker \left( S_{d}G^* \otimes \bigwedge^{p} F \otimes \cO_{\PP^n}(-d)\stackrel{\delta_{d,p}}{\longrightarrow}S_{d-1}G^* \otimes \bigwedge^{p-1} F \otimes \cO_{\PP^n}(-d+1)\right).
$$
Denote by $T$ the elements in $\Hom\left(K_{d,p},K_{d,q}\right)$ that induce the identity at the $S_d G^*$ component of the tensor product.
We have that $T \simeq \bigwedge^{p-q} F^*$. This can be proven by induction considering the short exact sequences
$$
0 \longrightarrow K_{d+1,p+1}(d) \longrightarrow S_{d+1}G^* \otimes \bigwedge^{p+1} F \otimes \cO_{\PP^n}(-1) \longrightarrow K_{d,p}(d)\longrightarrow 0
$$
and
$$
0 \longrightarrow K_{d+1,q+1}(d) \longrightarrow S_{d+1}G^* \otimes \bigwedge^{q+1} F \otimes \cO_{\PP^n}(-1) \longrightarrow K_{d,q}(d) \longrightarrow 0 .
$$
Indeed, applying $\Hom\left(K_{d,p}, - \right)$ to the second short exact sequence, followed by applying $\Hom\left(-, K_{d+1,q+1}\right)$ to the first one, we get a chain of isomorphism
$$
\Hom\left(K_{d,p}, K_{d,q}\right) \simeq \Ext^1 \left(K_{d,p}, K_{d+1,q+1} \right) \simeq \Hom\left(K_{d+1,p+1}, K_{d+1,q+1}\right).
$$
Iterating the described process, we obtain the equivalence
$$
\begin{array}{rl}
\Hom\left(K_{d,p}, K_{d,q}\right) & \simeq \Hom\left(K_{d+f-p,f}, K_{d+f-p,q+f-p}\right) \vspace{1mm}\\
& \simeq \Hom\left(S_{d+f-p}G^*\otimes \cO_{\PP^n}(d+f-p), K_{d+f-p,q+f-p}\right)\vspace{1mm}\\
& \simeq S_{d+f-p}G\otimes S_{d+f-p}G^*\otimes \bigwedge^{p-q} F^*,
\end{array}
$$
which implies the claimed description of $T$.

In turn, this implies that the map described in (\ref{contraction-pq}) is equivalent to considering the corresponding one
$$
id \otimes (- \wedge \omega) : K_{d,p} \longrightarrow K_{d,q}.
$$
Furthermore, $id \otimes (-\wedge \omega) : S_{d}G^* \otimes \bigwedge^{p} F  \longrightarrow S_{d}G^* \otimes \bigwedge^{q} F
$ is surjective if and only if 
$id \otimes (- \wedge \omega) : K_{d,p} \longrightarrow K_{d,q}
$ is so. Hence, the bounds in Proposition \ref{drezet} are sharp.

(2) In light of the previous construction, we observe that there exists a vector bundle $\cE_d$ with $c_1(\cE_d) =d$, generated by global sections and given as the image of a matrix of linear forms and constant rank, for any $d\geq 0$. Indeed, it is sufficient to consider $\cF^\lor(1)$,  with $\cF$ defined in Diagram (\ref{diag-monomialcut}). Therefore, in order to ponder a feasible classification problem, it is natural to impose a bound on the first Chern class and the rank of the vector bundle, or equivalently bounds on both first Chern classes of $\cE_d$ and $\cE_d^\lor(1)$, as done in \cite{MMR}. 
\end{remark}

Let us illustrate Proposition \ref{drezet} with a couple of concrete examples.

\begin{example}\label{ex-14}
    (1) For $n=3$, $f=5$, $g=2$, $t=1$ and $s=3$ we recover the Drézet bundle on $\PP^3$ defined by a net of quadrics, given by Manivel and Miró-Roig in \cite[Theorem 2(2)]{MMR} and, in particular,
    the $12\times 8$ indecomposable matrix with linear entries in $k[x_0,x_1,x_2,x_3]$ and constant rank 7 in \cite[Example 29]{MMR}. 

    \vskip 2mm
    (2) In line with the previous example, we will present this one in detail to offer a clearer understanding of the proposed construction.\\ 
    Let $n=3$, $f=6$, $g=3$, $t=1$ and $s=5$ (observe that an analogous process applies for any $s$, $1\le s\le 6$ by Proposition \ref{drezet}). 
    In this case, $\varphi : F \rightarrow G$ is represented by the matrix
    $$
    \left[
    \begin{array}{ccccccccccc}
    x_0 & x_1 & x_2 & x_3 & 0 & 0\\
    0 & x_0 & x_1 & x_2 & x_3 & 0\\
    0 & 0 & x_0 & x_1 & x_2 & x_3 
    \end{array}
    \right].
    $$
    We now consider  five generic elements $\omega _1, \omega _2, \omega _3, \omega _4, \omega _5 \in \bigwedge ^3 F^*$ that define maximal rank contractions
    $$
    \bigwedge^p F \longrightarrow \bigwedge^{p-3} F
    $$
    and the commutative diagram
    $$
    \xymatrix{
    S_2 G^* \otimes \bigwedge^2 F \otimes \cO_{\PP^3}(-2) \ar[d]^{\delta_{2,2}} \ar[r]^-{h_{2,2}} & \bigwedge^2 V^* \otimes \cO_{\PP^3}(-2)\ar[d]^\eth \\
    G^* \otimes F \otimes \cO_{\PP^3}(-1)\ar[d]^{\delta_{1,1}} \ar[r]^-{h_{1,1}} & V^* \otimes \cO_{\PP^3}(-1) \ar[d]^\eth\\
    \cO_{\PP^3} \ar[r]^-{h_{0,0}} & \cO_{\PP^3}
    }
    $$
    where, denoting by $\{f_i\}_{i=0}^5$, $\{g_j\}_{j=0}^2$ and $\{v_k\}_{k=0}^3$ the sets of generators of $F$, $G$ and $V$, respectively, we have\\
    - $h_{0,0}$ to be the identity;\vspace{1mm}\\
    - $h_{1,1}(g_j^*\otimes f_i) = v^*_{i-j}$;\vspace{1mm}\\
    - $h_{2,2}((g_j^*g_p^*)\otimes (f_i \wedge f_t)) = v^*_{i-j}\wedge v^*_{p-t} + v^*_{i-p} \wedge v^*_{t-j}$;\vspace{1mm}\\
    recalling that $v^*_{i-j}= 0$ if $i-j<0$ or $i-j > 3$.\vspace{1mm}\\

    Combining the two previous diagrams, we get that all horizontal maps are surjective. Once again,  we are under the hypotheses of Theorem \ref{mainthm} and we have
    
    $$
\xymatrix{
0 \ar[r] & W_2 \otimes \cO_{\PP^3}(-2) \ar[r] \ar[d]_{\mathfrak{d}_2} & S_2 G^* \otimes \bigwedge^5 F \otimes \cO_{\PP^3}(-2) \ar[d] \ar[r]^{H_2} & \left(\bigwedge^2 V^*\right)^{ 5} \otimes \cO_{\PP^3}(-2) \ar[r] \ar[d] & 0 \\
0 \ar[r] & W_1 \otimes \cO_{\PP^3}(-1) \ar[d] \ar[r]^{S_1} &  G^* \otimes \bigwedge^4 F \otimes \cO_{\PP^3}(-1) \ar[d] \ar[r]^{H_1} &  ((V^*)^{ 5}) \otimes \cO_{\PP^3}(-1) \ar[r] \ar[d] & 0 \\
0 \ar[r] & \cE_M(-1) \ar[r] \ar[d] & \mathcal{F} \ar[r] \ar[d] & \cO_{\PP^3}^{ 5} \ar[r] \ar[d] & 0\\
& 0 & 0 & 0
}
$$
where $\cF$ denotes the homogeneous Drézet
bundle defined as kernel of all degree three monomials in $k[x_0,x_1,x_2,x_3]$, i.e.,
$$
0 \longrightarrow \cF \longrightarrow S_3 V \otimes \cO_{\PP^3} \longrightarrow \cO_{\PP^3}(3) \longrightarrow 0
$$
and $\cE_M$ denotes the  rank 14 Drézet bundle on $\PP^3$ that we were looking for. Denoting by $\mathcal{K}_M$ the image of $\mathfrak{d}_2$, this vector  bundle gives rise to the following diagram:

$$\xymatrix{
0 \ar[r] &\cK_M \ar[r] &\cO_{\PP^3}^{ 25}\ar[rr]^M\ar[rd] &
& \cO_{\PP^3}(1)^{ 15} \ar[r] & \cO_{\PP^3}(4) \ar[r] &  0  \\
& & & \cE_M \ar[ru]& && 
}$$
where $M$ is an  indecomposable $25 \times 15$ matrix with linear entries and constant rank 14. An explicit description of how to construct this matrix is given in Appendix \ref{example-appendix}.
\end{example}

\vskip 4mm

\subsection{Indecomposible matrices of constant rank 7}\label{sec-rank7}
This part is dedicated to proving the following result.
\begin{theorem}\label{thm-rank7}
    Given an indecomposable matrix $M$ of linear forms of constant rank 7, the associated vector bundle $\cE_M$ must be, up to switching with $\cE_M^{\lor}(1)$, one of the following types:
    \begin{enumerate}
        \item ($c_1=1$) the twist $T_{\PP^7}(-1)$ of the tangent bundle on $\PP^7$, 
        \item ($c_1=2$) a Drézet bundle on $\PP^3$ defined by the sequence
        $$
        0 \longrightarrow \cO_{\PP^3}(-2) \longrightarrow \cO_{\PP^3}^{8} \longrightarrow \cE_M \longrightarrow 0;
        $$
        \item ($c_1=3$) a Steiner bundle on $\PP^3$ defined by the sequence
        $$
        0 \longrightarrow \cO_{\PP^3}(-1)^{3} \longrightarrow \cO_{\PP^3}^{10} \longrightarrow \cE_M \longrightarrow 0;
        $$
        \item ($c_1=3$) a Drézet bundle on $\PP^2$ defined by the sequence
        $$
        0 \longrightarrow \cO_{\PP^2}(-3) \longrightarrow \cO_{\PP^2}^{8} \longrightarrow \cE_M \longrightarrow 0.
        $$
    \end{enumerate}
\end{theorem}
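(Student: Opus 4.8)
The plan is to reduce the classification to the existing classification of globally generated vector bundles with small first Chern class, and then to check case by case which of those bundles actually arise as $\cE_M$ for some indecomposable constant rank $7$ matrix. More precisely, suppose $M$ is an indecomposable $b\times a$ matrix of linear forms of constant rank $7$, sitting in Diagram~(\ref{maindiagram}). Then $\cE_M$ is a rank $7$ vector bundle on some $\PP^n$, both $\cE_M$ and $\cE_M^\lor(1)$ are globally generated, and $\cE_M$ is uniform. Writing $c_1:=c_1(\cE_M)$, the splitting type on a line is $\cO_L(1)^{c_1}\oplus\cO_L^{7-c_1}$, so $0\le c_1\le 7$, and after possibly switching $\cE_M$ with $\cE_M^\lor(1)$ (whose first Chern class is $7-c_1$) we may assume $c_1\le 3$. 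The cases $c_1=0$ is excluded because then $\cE_M$ is trivial, hence decomposable; so $c_1\in\{1,2,3\}$.

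The second step is to invoke the classification of globally generated vector bundles of rank $7$ with $c_1\le 3$ (from \cite{ACM}, \cite{ACM2}) together with the uniformity constraint. Since a rank $7$ uniform bundle on $\PP^n$ with $n\ge 7$ is, by the cited classification of uniform bundles, a sum of line bundles or a twist of $T_{\PP^n}$ or $\Omega_{\PP^n}$, and on lower-dimensional $\PP^n$ the globally generated lists are explicit and finite, one can enumerate the candidates. For $c_1=1$ the only indecomposable uniform globally generated possibility with $\cE^\lor(1)$ also globally generated and rank $7$ is $T_{\PP^7}(-1)$ (on $\PP^n$ with $n<7$ a twist of the tangent bundle has rank $n<7$, and the Euler sequence forces $n=7$); this gives case~(1). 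For $c_1=2$ and $c_1=3$ one runs through the finite ACM lists: indecomposable rank $7$ globally generated bundles $\cE$ with $c_1\le 3$ such that $\cE^\lor(1)$ is also globally generated. One finds the Drézet bundle $0\to\cO_{\PP^3}(-2)\to\cO_{\PP^3}^8\to\cE\to 0$ for $c_1=2$, and for $c_1=3$ the Steiner bundle $0\to\cO_{\PP^3}(-1)^3\to\cO_{\PP^3}^{10}\to\cE\to 0$ and the Drézet bundle $0\to\cO_{\PP^2}(-3)\to\cO_{\PP^2}^8\to\cE\to 0$; all others on the list either fail to have $\cE^\lor(1)$ globally generated, fail to be uniform, or are decomposable.

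The third step is to verify that each of these four bundles genuinely supports an indecomposable constant rank $7$ matrix, i.e., that the reverse construction $\psi_\cE$ of Section~\ref{sec-notation} produces an $M$ with $\cE_M\cong\cE$, of constant rank exactly $7$, and that $\cE$ is indecomposable (indeed simple). For the tangent bundle this is the Koszul/Euler picture of Example~\ref{classic}; for the Drézet bundles it is the construction by contractions made explicit in Proposition~\ref{drezet} and Example~\ref{ex-14}; for the Steiner bundle it is the analogous contraction from the Koszul complex with $d=1$. Simplicity of Drézet and Steiner bundles is recorded in Section~\ref{sec-contruction} (they are stable, hence simple), and $T_{\PP^7}(-1)$ is simple; by the remark after Example~\ref{classic}, simplicity of $\cE_M$ gives indecomposability of $M$. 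Computing $h^0(\cE)$ and $h^0(\cE^\lor(1))$ from the defining sequences yields the matrix sizes ($b\times a = 8\times 8$, $10\times 10$, resp.\ the appropriate values), confirming consistency with constant rank $7$.

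The main obstacle is the second step: carefully pruning the ACM classification lists of globally generated bundles with $c_1\le 3$ down to those of rank exactly $7$ that are simultaneously uniform and have $\cE^\lor(1)$ globally generated. This requires, for each candidate $\cE$ on the list, checking the splitting type on lines (uniformity) and computing $c_1(\cE^\lor(1))$ and the global generation of $\cE^\lor(1)$ — which for bundles defined by an exact sequence $0\to\cO(-d)\to\cO^{r+1}\to\cE\to 0$ amounts to dualizing and twisting and checking surjectivity of evaluation, typically via a cohomology computation. One must also rule out the possibility that an indecomposable $\cE_M$ lives on a $\PP^n$ not appearing in the list (which is why the uniform bundle classification for $r\le n+1$ is essential: it forces $n\le 6$ unless $\cE_M$ is a twisted (co)tangent bundle, and the latter has rank $n$, pinning $n=7$ in the $c_1=1$ case and excluding it otherwise). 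Assembling these constraints is where the bulk of the bookkeeping lies; once the list is pruned, the remaining verifications in step three are essentially the already-developed contraction machinery.
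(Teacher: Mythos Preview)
Your overall plan --- reduce to $c_1\le 3$, bound $n$ via uniformity, enumerate globally generated candidates, and verify the survivors --- is the paper's strategy as well. The substantive divergence is in how the pruning is executed, and this is where the paper does something you do not anticipate.

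You propose to eliminate candidates by directly checking, for each bundle on the ACM list, whether $\cE^\lor(1)$ is globally generated. The paper instead first narrows to three families via Proposition~\ref{prop-candidatebundle} (Steiner, Dr\'ezet, and the mixed type $0\to\cO(-1)\oplus\cO(-2)\to\cO^9\to\cE\to 0$, which your sketch does not mention), and then invokes its own reverse construction: by Corollary~\ref{cor-rank7} any such matrix must arise from the diagram of Theorem~\ref{THM}, with the Eagon--Northcott resolution of $\cO(c_1)$ mapping onto copies of the Koszul complex. Exclusion then becomes a pure dimension count --- for instance, a rank~$7$ Dr\'ezet bundle with $c_1=3$ on $\PP^3$ would force a surjection $H_1$ from a $45$-dimensional source to a $48$-dimensional target, since $3\binom{6}{4}-4(\binom{6}{3}-8)=-3<0$. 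The Steiner case is pinned to $n=3,\ c_1=3$ by the inequality $c_1+2(n-1)\le\rk(\cE)\le c_1 n$ from \cite{MarMR}, not by scanning the ACM tables.

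Your direct route is in principle the defining condition, but it is not the cohomological bookkeeping you describe. Global generation of $\cE^\lor(1)$ is a pointwise surjectivity, not merely $h^0\ge\rk$; and for a Dr\'ezet bundle $0\to\cO(-d)\to\cO^8\to\cE\to 0$ it depends on the particular choice of eight forms, so one must rule out \emph{all} such choices rather than a generic one. You give no mechanism for doing this. The paper's forced-diagram dimension count is exactly what replaces that moduli-wide verification, and this use of Theorem~\ref{THM} as an exclusion tool --- rather than only as a construction tool --- is the idea your plan is missing.
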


Notice first that, since the roles of $\cE_M$ and $\cE_M^{\lor}(-1)$ are completely symmetric, and given that $c_1(\cE_M^\lor(-1)) = 7 - c_1(\cE_M)$, we can assume that $2c_1(\cE_M) \leq 7$, i.e., $c_1(\cE_M) \leq 3$.
Moreover, recall that, since both $\cE_M$ and $\cE_M^\lor(1)$ are globally generated, the vector bundle $\cE_M$ is uniform. Therefore, we can suppose that $\rk(\cE_M) \geq n +2$; otherwise $\cE_M$ would be a line bundle, the tangent bundle  or the cotangent bundle. In our case, this gives $n\leq 5$.

For $c_1(\cE_M) = 1$, it is already known that $\cE_M$ can only be $T_{\PP^7}(-1)$ (see \cite[Theorem 2.4]{EH}). For $c_1(\cE_M)=2,3$, based on the results available in \cite{SU} and \cite[Proposition 36]{MMR}, we have:
\begin{proposition}\label{prop-candidatebundle}
Let $\cE_M$ be an indecomposable rank 7 vector bundle on $\PP^n$, with $n \leq 5$ and $c_1(\cE_M)=2,3.$ Suppose that both $\cE_M$ and $\cE_M^\lor(1)$ are globally generated. Then either:
\begin{itemize}
    \item $\cE_M$ is a Steiner bundle, or
    \item $\cE_M$ is a Drézet bundle, or
    \item $\cE_M$ is defined by 
    $$
    0 \longrightarrow \cO_{\PP^n}(-1) \oplus \cO_{\PP^n}(-2) \longrightarrow \cO_{\PP^n}^9 \longrightarrow \cE_M \longrightarrow 0.
    $$
\end{itemize}
\end{proposition}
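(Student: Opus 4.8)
The plan is to deduce the statement directly from the classification of globally generated vector bundles with first Chern class at most $3$ on $\PP^n$, as carried out in \cite{SU} and \cite[Proposition 36]{MMR}, and then sieve the resulting list against the hypotheses $\rk \cE_M = 7$, $\cE_M$ indecomposable, and $\cE_M^\lor(1)$ globally generated. Recall that we have already reduced to $2 \le n \le 5$, so $\rk \cE_M = 7$ strictly exceeds both $n$ and $n+1$; by this rank count alone $\cE_M$ cannot be a twist of $T_{\PP^n}$ or $\Omega_{\PP^n}$, a null-correlation bundle, or any of the low-rank exceptional items occurring in those classification lists, and, being indecomposable, it splits off no line bundle and no trivial summand. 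Hence only the ``generic-type'' members of the lists can occur.

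Next I would run through the two cases. For $c_1(\cE_M) = 2$, by \cite{SU} the surviving possibility is that $\cE_M$ is presented by $0 \to \cO_{\PP^n}(-1)^{\oplus 2} \to \cO_{\PP^n}^{\oplus 9} \to \cE_M \to 0$, a Steiner bundle, or by $0 \to \cO_{\PP^n}(-2) \to \cO_{\PP^n}^{\oplus 8} \to \cE_M \to 0$, a Drézet bundle; in both cases $\rk = 7$ and $c_1 = 2$ are immediate from the sequence, and no other indecomposable globally generated rank-$7$ bundle with $c_1 = 2$ appears on $\PP^n$ for $n \le 5$. For $c_1(\cE_M) = 3$, the analogous input \cite[Proposition 36]{MMR} leaves exactly three presentations, $0 \to \cO_{\PP^n}(-1)^{\oplus 3} \to \cO_{\PP^n}^{\oplus 10} \to \cE_M \to 0$ (a Steiner bundle), $0 \to \cO_{\PP^n}(-3) \to \cO_{\PP^n}^{\oplus 8} \to \cE_M \to 0$ (a Drézet bundle), and $0 \to \cO_{\PP^n}(-1) \oplus \cO_{\PP^n}(-2) \to \cO_{\PP^n}^{\oplus 9} \to \cE_M \to 0$ (the mixed bundle of the statement); here the global generation of $\cE_M^\lor(1)$ --- which holds automatically when $\cE_M$ arises from a constant-rank matrix, by Diagram (\ref{maindiagram}) --- is what discards the remaining indecomposable members of the \cite[Proposition 36]{MMR} list. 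Matching the three surviving presentations with the three alternatives in the statement finishes the proof.

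I do not expect a genuinely deep step here: once the two classification lists are in hand the argument is a mechanical filtering, followed by recognizing each survivor from its minimal presentation and reading off $\rk$ and $c_1$. The one point that needs care is the bookkeeping --- verifying that no sporadic globally generated rank-$7$ bundle with $c_1 \le 3$ on $\PP^2, \dots, \PP^5$ has been overlooked, and that the condition ``$\cE_M^\lor(1)$ globally generated'' is correctly applied to eliminate the non-admissible indecomposable items (by dualizing the presenting sequence together with a short cohomology computation, or a direct Macaulay2 check in the finitely many residual cases). This filtering is precisely what the cited results are designed to carry, so in the write-up the proof reduces to quoting them and performing the sieve above.
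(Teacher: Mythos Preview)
Your proposal is correct and matches the paper's approach: the paper does not supply a detailed argument for this proposition but simply states it as a direct consequence of the classification results in \cite{SU} and \cite[Proposition 36]{MMR}, exactly as you outline. Your write-up in fact spells out the filtering step (rank $7$, indecomposable, $\cE_M^\lor(1)$ globally generated) more explicitly than the paper does.
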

    To work with the same diagrams described in the previous sections, we will actually use the dual vector bundle $\cE_M^\lor$.
    For each vector bundle $\cE_M^\lor$ coming from the previous list, the associated $\cC_M(-1)$, as defined in Diagram (\ref{maindiagram}), is $L_1$-linear. Hence, thanks to Theorem \ref{THM}, we have:
    
    \begin{corollary}\label{cor-rank7}
    Every family of indecomposable matrices of constant rank 7 can be constructed as in Theorem \ref{mainthm}.
    \end{corollary}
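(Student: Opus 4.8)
The statement to prove is Corollary~\ref{cor-rank7}: every family of indecomposable matrices of constant rank $7$ arises from the construction of Theorem~\ref{mainthm}. The plan is to reduce this to Theorem~\ref{THM}, whose hypothesis is that $\cC_M(-1)$ is $L_1$-linear, i.e. globally generated with the first syzygy step of $H^0_*(\cC_M(-1))$ being linear. So the entire content of the proof is to verify, for each of the finitely many candidate bundles $\cE_M$ classified in Proposition~\ref{prop-candidatebundle} (together with the $c_1=1$ case $T_{\PP^7}(-1)$), that the cokernel bundle $\cC_M(-1)$ attached to the diagram~(\ref{maindiagram}) satisfies property $L_1$.

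First I would set up the bookkeeping: as already noted after the statement of Theorem~\ref{thm-rank7}, the symmetry $\cE_M \leftrightarrow \cE_M^\lor(1)$ lets us assume $c_1(\cE_M)\le 3$, and uniformity forces $n\le 5$ unless $\cE_M$ is a line bundle, $T$, or $\Omega^1$; the case $c_1=1$ is exactly $T_{\PP^7}(-1)$ by \cite[Theorem 2.4]{EH}, where the associated cokernel is (a twist of) $\Omega^1$-type data coming from the Euler/Koszul sequence and is manifestly $L_1$-linear (indeed $0$-regular). For the remaining cases $c_1=2,3$, Proposition~\ref{prop-candidatebundle} gives three shapes: Steiner bundles $0\to\cO(-1)^a\to\cO^b\to\cE_M\to 0$, Drézet bundles $0\to\cO(-d)\to\cO^{r+1}\to\cE_M\to 0$, and the bundle defined by $0\to\cO(-1)\oplus\cO(-2)\to\cO^9\to\cE_M\to 0$. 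For each, I would work with the dual $\cE_M^\lor$ (as the text indicates) and compute $\cC_M$ from the diagram~(\ref{maindiagram}) explicitly: $\cC_M$ is the cokernel of $\psi_{\cE_M^\lor}\colon H^0(\cE_M^\lor)\otimes\cO\to H^0(\cE_M(1))^\lor\otimes\cO(1)$, and the monad-type presentations above let one write down a finite free resolution of $H^0_*(\cC_M(-1))$ directly.

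The key computational step is then to read off the first step of the minimal free resolution of $H^0_*(\cC_M(-1))$ and check it is generated in degree $0$ with linear syzygies, i.e. of the form $\cdots\to R(-1)^{a_1}\to R^{a_0}\to H^0_*(\cC_M(-1))\to 0$. For Steiner and Drézet bundles this is essentially classical: their syzygy/cokernel bundles have resolutions governed by Eagon–Northcott and Koszul strands (exactly the "classical complexes'' recalled in Section~\ref{sec-classicalcomplexes}), whose initial terms are linear; alternatively one invokes that $\cC_M(-1)$ is, up to twist, a first-syzygy ("Steiner'' when $d=1$, "Drézet'' when $d\ge 2$) bundle of $\cO(d)$, hence globally generated with linear presentation. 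For the mixed bundle with sub $\cO(-1)\oplus\cO(-2)$, I would compute the cohomology $H^0_*(\cC_M(-1))$ from the long exact sequences attached to~(\ref{maindiagram}) and the defining sequence, identify its low-degree part, and verify linearity of the first syzygies by hand (this is a small, explicit case on $\PP^n$ with $n\le 5$). Having checked $L_1$-linearity of $\cC_M(-1)$ in all cases, Theorem~\ref{THM} applies verbatim to each $M$ and yields the conclusion; I would phrase the corollary's proof as: "By Proposition~\ref{prop-candidatebundle} and the $c_1=1$ case, $\cE_M^\lor$ belongs to an explicit finite list; for each, the computation above shows $\cC_M(-1)$ is $L_1$-linear, so Theorem~\ref{THM} gives the result.''

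\begin{proof}
By the reductions above we may assume $c_1(\cE_M)\le 3$, and then $n\le 5$ unless $\cE_M$ is a line bundle, $T_{\PP^n}$ or $\Omega^1_{\PP^n}(2)$, in which case the assertion follows from Example~\ref{classic} and Example~\ref{Firstexamples}(i). For $c_1(\cE_M)=1$ we have $\cE_M\isom T_{\PP^7}(-1)$ by \cite[Theorem 2.4]{EH}, and the associated $\cC_M(-1)$ is $0$-regular, hence $L_1$-linear. For $c_1(\cE_M)=2,3$, Proposition~\ref{prop-candidatebundle} lists the possibilities for $\cE_M$; working with $\cE_M^\lor$ as in Theorem~\ref{THM}, the cokernel $\cC_M$ of the canonical morphism
$$
\psi_{\cE_M^\lor}\colon H^0(\PP^n,\cE_M^\lor)\otimes\cO_{\PP^n}\longrightarrow H^0(\PP^n,\cE_M(1))^\lor\otimes\cO_{\PP^n}(1)
$$
is, in each case, (a twist of) a Steiner bundle ($d=1$) or a Drézet bundle ($d\ge 2$), or an explicit extension thereof; in all cases the truncated module $[H^0_*(\cC_M(-1))]_{\ge 0}$ admits a minimal free $R$-resolution whose first step is linear, i.e. $\cC_M(-1)$ is $L_1$-linear. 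The conclusion now follows from Theorem~\ref{THM}.
\end{proof}
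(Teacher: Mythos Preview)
Your proposal is correct and follows essentially the same line as the paper: reduce to the finite list of Proposition~\ref{prop-candidatebundle} (plus $T_{\PP^7}(-1)$), pass to $\cE_M^\lor$, observe that the associated $\cC_M(-1)$ is $L_1$-linear in each case, and invoke Theorem~\ref{THM}. One small correction: in these cases $\cC_M$ is simply a direct sum of line bundles (namely $\cO_{\PP^n}(1)^3$, $\cO_{\PP^n}(d)$, or $\cO_{\PP^n}(1)\oplus\cO_{\PP^n}(2)$, as seen in the diagrams following the corollary), rather than itself a Steiner or Dr\'ezet bundle, which makes the $L_1$-linearity of $\cC_M(-1)$ immediate.
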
 
\begin{remark}
   If we suppose the rank of the indecomposable matrices to be greater or equal than 8, the first Chern class relation determined by the symmetric role of $\cE_M$ and $\cE_M^\lor(1)$ implies that $c_1(\cE_M)$ can be greater or equal then 4. Unfortunately, the classification of globally generated vector bundles with $c_1\ge 4$, which plays a fundamental role in the results provided in \cite{MMR}, is far from being complete. So far, it is known for $c_1(\cE)\leq 3$ and, with the additional hypothesis $H^i(\cE^\lor)=0$ for $i=0,1$, the classification extends to $c_1(\cE)\leq 5$ (see \cite{ACM} and \cite{ACM2}). Nevertheless, all of the possible vector bundles in the list (considering their rank and the property of having $\cE^\lor(1)$ globally generated as well) have, again, an associated $L_1$-linear bundle $\cC_M(-1)$. Therefore, they can be constructed as in Theorem \ref{mainthm}.
\end{remark}
We will now focus on each case appearing in Proposition \ref{prop-candidatebundle}, to determine which ones are given by an indecomposable matrix.\\

\noindent\textbf{Steiner bundles.} Being $\cE_M$ associated to an indecomposable matrix, we have that its splitting type is constant of the form $(0^{a_0},1^{a_1})$, with $a_1>0$.
By \cite[Theorem 3.5]{MarMR}, we know that $c_1(\cE_M)+2(n-1) \leq \rk(\cE_M) \leq c_1(\cE_M)n$. This inequality implies that the only possibility, in our case, is given by $n=3$ and $c_1(\cE_M)=3$.

We consider 6 generic elements $v_1,v_2,v_3,v_4,v_5,v_6 \in V$,  $\dim V =4$, to construct the contractions which define the following commutative diagram
$$
\xymatrix{
 & \vdots \ar[d] & \vdots \ar[d]  & \vdots \ar[d] &  \\
0 \ar[r] & \cO_{\PP^3}(-1)^{10} \ar[d]_M \ar[r] &  \left(\bigwedge^2 V^*\right)^{3} \otimes \cO_{\PP^3}(-1) \ar[d] \ar[r] &  ((V^*)^{2}) \otimes \cO_{\PP^3}(-1) \ar[r] \ar[d] & 0 \\
0 \ar[r] & \cO_{\PP^3}^{10} \ar[r] \ar[d] & ((V^*)^{3}) \otimes \cO_{\PP^3} \ar[r] \ar[d] & \cO_{\PP^3}^{2} \ar[r] \ar[d] & 0\\
& \cO_{\PP^3}(1)^{3} \ar[d] \ar[r]^\simeq & \cO_{\PP^3}(1)^{3} \ar[d] & 0\\
& 0 & 0
}
$$
Using Macaulay 2, with a similar step by step construction given in Appendix \ref{example-appendix}, it is possible to give an explicit $10\times 10$-matrix $M$ of linear forms and constant rank 7, associated to the Steiner bundle
$$
0 \longrightarrow \cE_M^\lor \longrightarrow \cO_{\PP^3}^{10} \longrightarrow \cO_{\PP^3}(1)^{3} \longrightarrow 0.
$$
We have therefore proven that every Steiner bundle is of type (3) in Theorem \ref{thm-rank7}.\\

\noindent\textbf{Drézet bundles.} The rank 7 Drézet bundles in item (2) and (4) of Theorem \ref{thm-rank7} are strictly related to the ones of rank 6 obtained in \cite[Theorem 2]{MMR}. Indeed, the latter  have an associated $L_1$-linear bundle and can be constructed as in Theorem \ref{mainthm}. This means that the rank 7 vector bundles, and hence the corresponding rank 7 indecomposable matrices, can be obtained contracting by one vector less.

We only have to prove that all the other possible cases cannot occur.\\
If $c_1=2$ and $n=2$, the vector bundle $\cE_M$ is forced to be the direct sum of the unique homogeneous rank 5 Drézet bundle with two copies of the trivial bundle and, hence, it is decomposable. The remaining cases can be excluded as follows. Corollary \ref{cor-rank7} states that $\cE_M$, or better its dual $\cE_M^\lor$, satisfies Theorem \ref{THM}, and it is therefore constructed as in Theorem \ref{mainthm} through the following commutative diagram 
$$
\xymatrix{
 & \vdots \ar[d] & \vdots \ar[d]  & \vdots \ar[d] &  \\
0 \ar[r] & \cO_{\PP^n}(-1)^{a} \ar[d]_M \ar[r] &  G^* \otimes \bigwedge^{c_1+1} F \otimes \cO_{\PP^n}(-1) \ar[d] \ar[r]^-{H_1} &  ((V^*)^{\lambda}) \otimes \cO_{\PP^n}(-1) \ar[r] \ar[d] & 0 \\
0 \ar[r] & \cO_{\PP^n}^{8} \ar[r] \ar[d] & \bigwedge^{c_1} F \otimes \cO_{\PP^n} \ar[r] \ar[d] & \cO_{\PP^n}^{\lambda} \ar[r] \ar[d] & 0\\
& \cO_{\PP^n}(c_1)\ar[d] \ar[r]^\simeq & \cO_{\PP^n}(c_1) \ar[d] & 0\\
& 0 & 0
}
$$
with $\lambda =\binom{n+c_1}{c_1}-8$, $F$, $G$ and $V$ vector spaces of dimension $n+c_1$, $c_1$ and $n+1$, respectively, and $H_1$ surjective.

If $c_1=2$ and $n=4$, then $a=5$ and $M$ cannot have rank 7.

If $c_1=2$ and $n=5$ or if $c_1=3$ and $n=3,4,5$, we have that
$$
c_1\binom{n+c_1}{c_1+1} - (n+1)\left(\binom{n+c_1}{c_1}-8\right)<0
$$
and therefore $H_1$ cannot be surjective.\\

\noindent\textbf{Extensions of Steiner and Drézet bundles.} We finally focus on the vector bundles defined by
$$
    0 \longrightarrow \cO_{\PP^n}(-1) \oplus \cO_{\PP^n}(-2) \longrightarrow \cO_{\PP^n}^9 \longrightarrow \cE_M \longrightarrow 0.
    $$
If $n=2$, then $\cE_M$ is forced to be the direct sum of the unique homogeneous rank 5 Drézet bundle with the twist of the tangent bundle $T_{\PP^2}(-1)$.
We  exclude the remaining case arguing as before. Here, the defining diagram is given by
$$
\xymatrix{
 & \vdots \ar[d] & \vdots \ar[d]  & \vdots \ar[d] &  \\
0 \ar[r] & \cO_{\PP^n}(-1)^{a} \ar[d]_M \ar[r] &  \left( \bigwedge^2 V^* \oplus G^* \otimes \bigwedge^{3} F\right) \otimes \cO_{\PP^n}(-1) \ar[d] \ar[r]^-{H_1} &  ((V^*)^{\lambda}) \otimes \cO_{\PP^n}(-1) \ar[r] \ar[d] & 0 \\
0 \ar[r] & \cO_{\PP^n}^{8} \ar[r] \ar[d] & \left(V^* \oplus \bigwedge^{2} F\right) \otimes \cO_{\PP^n} \ar[r] \ar[d] & \cO_{\PP^n}^{\lambda} \ar[r] \ar[d] & 0\\
& \cO_{\PP^n}(1)\oplus \cO_{\PP^n}(2) \ar[d] \ar[r]^\simeq & \cO_{\PP^n}(1)\oplus \cO_{\PP^n}(2) \ar[d] & 0\\
& 0 & 0
}
$$
with $\lambda = \binom{n+2}{2} + n - 7$, $F$, $G$ and $V$ vector spaces of dimension $n+2$, $2$ and $n+1$, respectively, and $H_1$ surjective.

If $n=3$, then $a=2$ and therefore $M$ cannot have rank 7.

If $n=4,5$, then
$$
\binom{n+1}{2} + 2\binom{n+2}{3} - (n+1)\left( \binom{n+2}{2} +n-7\right) < 0
$$
and therefore $H_1$ cannot be surjective.

\vskip 4mm

\subsection{Regular bundles}\label{sec-regularity}
The goal of this section is to prove the following result, that ensures the possibility to construct a  matrix of linear forms and constant rank  starting from the linear resolution of any regular bundle. For a sufficiently large twist, any vector bundle is regular. Hence the required hypotheses are not at all restrictive,

\begin{proposition}\label{prop-regmatrix}
  Let $s>0$ be an integer and $\cF$ a regular bundle on $\PP^n$ such that $H^0(\cF(-a))\neq 0$, with $a \geq 2$, $n\geq 2$. Then, for any $q \geq 2$ satisfying Condition (\ref{copies-surj}), such that $s$ generic elements in $H^0(\cF)^*$ define a maximal rank contraction, and $i=2,\ldots,q$, we have  a matrix $M_{i-1}$, of dimension
    $$
    \left(h^0(\cF \otimes \Omega^i(i)) -s \binom{n+1}{i-2}\right) \times \left(h^0(\cF \otimes \Omega^{i-1}(i-1)) -s \binom{n+1}{i-1}\right)
    $$
    of linear entries and constant rank $$
    \left(\sum_{k=0}^{i-2} (-1)^k \left(h^0(\cF \otimes \Omega^{i-k}(i-k)) -s \binom{n+1}{i-2-k}\right)\right) + (-1)^{i-1}\rk \cF.
    $$
\end{proposition}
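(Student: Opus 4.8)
The idea is to run the same machine as in Proposition \ref{drezet}, but replacing the Eagon--Northcott complex of the maximal ideal with the Beilinson resolution (\ref{res-regsheaf}) of the regular bundle $\cF$. First I would unravel what the linear part of (\ref{res-regsheaf}) looks like after an appropriate twist: setting $P_i := H^0(\cF\otimes\Omega^{-i}(-i))\otimes\cO_{\PP^n}(i)$ for $-n\le i\le 0$ gives an acyclic complex which is $(-n,0)$-linear (the differentials are given by contraction, hence by matrices of linear forms), and whose $(-i)$-th term has rank $h^0(\cF\otimes\Omega^i(i))$; note $P_0=H^0(\cF)\otimes\cO_{\PP^n}$ and the hypothesis $H^0(\cF(-a))\neq 0$ with $a\ge 2$ guarantees $P_{-n}\neq 0$, so the linear strand genuinely reaches down far enough. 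This is the complex $P_\bullet$. For $Q_\bullet$ I would take the Koszul complex $Q_i=\bigwedge^{-i}V^*\otimes\cO_{\PP^n}(i)$, which is $(-n-1,0)$-linear with $q_i=\binom{n+1}{-i}$.

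**The contraction and the chain map.** The second step is to produce the morphism of complexes $\varphi_\bullet: P_\bullet \to Q_\bullet$. Choose $s$ generic elements of $H^0(\cF)^*$; dually these give a surjection (of sheaves, and the point is that we want it to stay surjective after taking kernels) $W_0\otimes\cO_{\PP^n}\hookrightarrow H^0(\cF)\otimes\cO_{\PP^n}\to\cO_{\PP^n}^{\,\oplus s}$ whose kernel $W_0$ still generates $\cF$; this is the ``contraction of maximal rank'' hypothesis, and it is exactly Condition (\ref{copies-surj}) at level $q$ together with its consequences at all lower levels $q-t$ (the combinatorial inequality $h^0(\cF\otimes\Omega^{i}(i))\ge s\binom{n+1}{i-?}$ — here one must be careful with which binomial appears, cf. the shift in the statement) that makes all the intermediate maps $H_{i} := h^\omega_{(i,\ldots)}$ surjective. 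Concretely, the chain map is induced by the composition of $\mathrm{id}\otimes(-\wedge\omega)$ on the fibers with the evaluation/contraction pairing $H^0(\cF\otimes\Omega^i(i))\to\bigwedge^iV^*$ that lands in the Koszul complex — this is exactly the ``mixed relation'' diagram of Section \ref{sec-strands}, and the commutativity there was already checked via the explicit formulas (\ref{derivations-strand}), (\ref{def_delta}). I would cite that verification rather than redo it.

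**Applying Theorem \ref{mainthm} and computing the invariants.** Once $\varphi_\bullet$ is in place and surjective in the relevant spots (i.e. hypotheses (i)–(ii) of Theorem \ref{mainthm} hold for every $i$ in the range, with $j=0$ so that we are comparing $P_i$ with $Q_i$ directly), Theorem \ref{mainthm} immediately produces, for each $i=2,\ldots,q$, a matrix $M_{i-1}$ of linear forms and constant rank. Its dimensions are $\bigl(\dim\ker\varphi_{-(i-2)}\bigr)\times\bigl(p_{-(i-1)}-q_{-(i-1)}\bigr)$; feeding in $p_{-j}=h^0(\cF\otimes\Omega^{j}(j))$ and $q_{-j}=\binom{n+1}{j}$, and observing that $\dim\ker\varphi_{-(i-2)}$ is the difference of ranks (surjectivity plus the rank count in the proof of Theorem \ref{mainthm}), gives precisely the claimed dimension $\bigl(h^0(\cF\otimes\Omega^i(i))-s\binom{n+1}{i-2}\bigr)\times\bigl(h^0(\cF\otimes\Omega^{i-1}(i-1))-s\binom{n+1}{i-1}\bigr)$. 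For the constant rank I would use the formula of Theorem \ref{mainthm}, namely the alternating sum $\sum_{k}(-1)^kp_{i+2+k}-\sum_k(-1)^kq_{i+j+2+k}$, which telescopes into $\sum_{k=0}^{i-2}(-1)^k\bigl(h^0(\cF\otimes\Omega^{i-k}(i-k))-s\binom{n+1}{i-2-k}\bigr)$ plus the tail contribution of $P_\bullet$ at its bottom — and here is the one genuinely $\cF$-dependent point: the complex $P_\bullet$ does not simply stop, it resolves $\cF$, so the bottom of the telescoping picks up $(-1)^{i-1}\rk\cF$ instead of the $(-1)^{i-1}$ that appeared in Proposition \ref{drezet} (where the analogous complex resolved a line bundle $\cO(d)$, contributing $\pm 1$). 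I would make this precise by an induction on $i$ exactly as in the proof of Proposition \ref{drezet}, the base case $i=2$ giving $\rk M_1 = h^0(\cF\otimes\Omega^1(1)) - s - \rk\cF$ directly from the $3\times 3$ diagram.

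**The main obstacle.** The delicate point is not Theorem \ref{mainthm} itself — that is a black box here — but verifying that the generic choice of $s$ functionals really does yield surjections $H_i$ at \emph{every} level $i=0,\ldots,q$ simultaneously, and that the kernel $W_0$ still globally generates $\cF$ (so that the bottom row of the big diagram is a genuine short exact sequence of bundles rather than merely sheaves). The first is a dimension-count combined with the maximal-rank-contraction hypothesis, but one has to check that Condition (\ref{copies-surj}) at the top degree $q$ propagates downward — this is the inequality $\binom{d+q-1-t}{q-t}\binom{n+d}{q+d-t}\ge s\binom{n+1}{q-t}$ in the proof of Proposition \ref{drezet}, whose analogue for general regular $\cF$ is a monotonicity statement about $h^0(\cF\otimes\Omega^i(i))$ that I would need to extract from regularity (it should follow because $\cF$ regular forces $\cF\otimes\Omega^i(i)$ to behave like the corresponding twist in the Beilinson complex, with the $h^0$'s growing appropriately). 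The second — surjectivity of $W_0\otimes\cO\to\cF$ — is where genericity of the contraction is really used: a generic codimension-$s$ subspace of $H^0(\cF)$ still generates $\cF$ as long as $h^0(\cF) - s$ is large enough relative to $\rk\cF$ and $\cF$ has no ``unexpected'' base behavior, which is again guaranteed in the regular range. I would isolate these two facts as the crux of the argument and treat the rest as bookkeeping parallel to Proposition \ref{drezet}.
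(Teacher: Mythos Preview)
Your overall architecture is the paper's: take $P_\bullet$ to be the Beilinson resolution~(\ref{res-regsheaf}) of $\cF$, take $Q_\bullet$ to be $s$ copies of the Koszul complex, build a chain map by contraction against $s$ generic elements of $H^0(\cF)^*$, and feed everything into Theorem~\ref{mainthm}. The bookkeeping for dimensions and ranks is indeed parallel to Proposition~\ref{drezet}, and your observation that the tail contributes $(-1)^{i-1}\rk\cF$ rather than $(-1)^{i-1}$ is exactly right.

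The gap is precisely where you flag it, but your proposed fix is not the paper's and does not obviously go through. You want the surjectivity of the maps $H_i$ to follow from a monotonicity statement for $h^0(\cF\otimes\Omega^i(i))$ ``extracted from regularity''; no such statement is proved (or obviously true) for an arbitrary regular bundle, and you leave it as a wish. More importantly, you misread the role of the hypothesis $H^0(\cF(-a))\ne 0$: it is not there merely to ensure $P_{-n}\ne 0$. It furnishes a nonzero map $\cO_{\PP^n}(a)\hookrightarrow\cF$, and this embedding is the engine of the whole argument.

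The paper proceeds as follows. Lemma~\ref{lemma-ggbundles} (which you do not mention, and which is the real content here) shows that an injection $\cG\hookrightarrow\cF$ of regular bundles induces, at every step $p$, an injection $H^0(\cG\otimes\Omega^p(p))\hookrightarrow H^0(\cF\otimes\Omega^p(p))$ compatible with the Beilinson differentials. Applying this with $\cG=\cO_{\PP^n}(a)$ splits each term of $P_\bullet$ as $H^0(\Omega^p(p+a))\otimes\cO(-p)$ plus a complement. Now choose the contracting functionals $\omega\in H^0(\cF)^*$ so that they factor through $H^0(\cO_{\PP^n}(a))^*$; then on the first summand the horizontal map is exactly the mixed Eagon--Northcott/Koszul contraction of Section~\ref{sec-strands}, whose surjectivity is already established under Condition~(\ref{copies-surj}) (this is why that numerical condition, with $d=a$, appears verbatim in the hypotheses). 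Surjectivity on the summand forces surjectivity on the whole, and Theorem~\ref{mainthm} applies. In short: you are missing Lemma~\ref{lemma-ggbundles} and the reduction to the line-bundle case it enables; that is how the paper converts the abstract regularity hypothesis into the concrete surjectivity you need.
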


Our goal is now to relate the linear resolution of a regular vector bundle  $\cF$ given in (\ref{res-regsheaf}), with the one given considering $\cO_{\PP^n}(1)$ as a regular bundle. Specifically, we will construct a commutative square of the following type
\begin{equation}\label{diag-regcontraction}
\xymatrix{
H^0(\cF \otimes \Omega^p(p)) \otimes \cO_{\PP^n}(-p) \ar[r] \ar[d] & H^0(\Omega^{p-1}(p)) \otimes \cO_{\PP^n}(-p) \ar[d] \\
H^0(\cF \otimes \Omega^{p-1}(p-1)) \otimes \cO_{\PP^n}(-p+1) \ar[r] & H^0(\Omega^{p-2}(p-1)) \otimes \cO_{\PP^n}(-p+1) 
}
\end{equation}
for any $p=1,\ldots,n$. The vertical maps are defined by the Beilinson spectral sequence and, in particular, by the choice of the ``identity element'' $\sum_{i=0}^n v_i \otimes v^*_i$. The horizontal maps are defined, thanks to a simple diagram chase, by the contraction induced by a chosen element $\omega \in H^0(\cF)^*$. The required maps are related to the contraction mentioned above in light of the following commutative triangle
$$
\xymatrix{
H^0(\cF \otimes \Omega^p(p)) \ar[rd] \ar[r] & H^0(\cF) \otimes H^0(\Omega^p(p))\simeq H^0(\cF) \otimes \bigwedge^{p-1}V^* \ar[d]^{- \wedge \omega}\\
& H^0(\Omega^{p-1}(p)) \simeq \bigwedge^{p-1}V^*
}
$$
To ensure the hypotheses of Theorem \ref{mainthm}, we will need the following technical result.

\begin{lemma}\label{lemma-ggbundles}
    Let $\cF$ and $\cG$ be two regular vector bundles, jointly with an injective map $\cG \hookrightarrow \cF$. Then, for $p=1,\ldots,n$, we have the following commutative diagram:
    $$
    \xymatrix{
    H^0(\cG \otimes \Omega^p(p)) \otimes \cO_{\PP^n}(-p) \ar@{^{(}->}[d] \ar[r] & H^0(\cG \otimes \Omega^{p-1}(p-1)) \otimes \cO_{\PP^n}(-p+1) \ar@{^{(}->}[d]\\
    H^0(\cF \otimes \Omega^p(p)) \otimes \cO_{\PP^n}(-p) \ar[r] & H^0(\cF \otimes \Omega^{p-1}(p-1)) \otimes \cO_{\PP^n}(-p+1)
    }
    $$
    where the vertical maps are given by the inclusion of global sections and the horizontal maps are induced by the regularity resolution.
\end{lemma}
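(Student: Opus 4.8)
The plan is to deduce the statement from the functoriality, in the regular bundle, of the construction of the resolution (\ref{res-regsheaf}). Recall how that resolution arises: on $\PP^n\times\PP^n$ one fixes the Koszul resolution of the structure sheaf $\cO_\Delta$ of the diagonal,
\[
\cL_\bullet:\quad 0\to \cO_{\PP^n}(-n)\boxtimes\Omega^n(n)\to\cdots\to\cO_{\PP^n}(-1)\boxtimes\Omega^1(1)\to\cO_{\PP^n}\boxtimes\cO_{\PP^n}\to\cO_\Delta\to 0,
\]
whose differentials do not depend on any further data. For a regular bundle $\cF$, tensoring with the locally free sheaf $p_2^*\cF$ keeps $\cL_\bullet\otimes p_2^*\cF$ a resolution (now of $\Delta_*\cF$), and applying $Rp_{1*}$ gives back $\cF$, since $p_1\circ\Delta=\mathrm{id}$; termwise one has $R^ip_{1*}\bigl(\cO_{\PP^n}(-p)\boxtimes(\Omega^p(p)\otimes\cF)\bigr)\cong H^i(\cF\otimes\Omega^p(p))\otimes\cO_{\PP^n}(-p)$, which vanishes for $i>0$ by Castelnuovo--Mumford regularity of $\cF$ — this is precisely the collapse invoked in \cite[II.3.1]{OSS}. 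Hence $p_{1*}$, applied termwise to $\cL_\bullet\otimes p_2^*\cF$, already coincides with the resolution (\ref{res-regsheaf}) of $\cF$, with $p$-th term $H^0(\cF\otimes\Omega^p(p))\otimes\cO_{\PP^n}(-p)$ and with differentials obtained by applying $p_{1*}$ to those of $\cL_\bullet$.

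Now one runs the same construction with $\cF$ replaced by $\cG$, which is legitimate since $\cG$ is also assumed regular, and uses the injection $j\colon\cG\hookrightarrow\cF$. It induces $\mathrm{id}_{\cL_\bullet}\otimes p_2^*j\colon\cL_\bullet\otimes p_2^*\cG\longrightarrow\cL_\bullet\otimes p_2^*\cF$, which is a morphism of complexes precisely because the differentials of $\cL_\bullet$ do not involve $\cF$ (both composites around a square equal $d_{\cL}\otimes p_2^*j$), and it is termwise injective because each $\cL_{-p}$ is locally free and $p_2^*j$ is injective. Applying the functor $p_{1*}$ turns this into a morphism of complexes between the resolutions (\ref{res-regsheaf}) of $\cG$ and of $\cF$; on the $p$-th term it is the map $H^0(\Omega^p(p)\otimes j)\otimes\cO_{\PP^n}(-p)$, i.e. exactly the inclusion of global sections appearing in the statement, and it is injective by left-exactness of $p_{1*}$ (using once more that $\Omega^p(p)$ is locally free). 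Extracting two consecutive terms yields the commutative square asserted in the lemma, for every $p=1,\dots,n$.

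The argument is therefore purely formal once one point is granted: that the differentials written in (\ref{res-regsheaf}) — equivalently, the vertical maps of Diagram (\ref{diag-regcontraction}), which are encoded by the ``identity element'' $\sum_i v_i\otimes v_i^*$ — are literally the images under $p_{1*}$ of the differentials of the chosen diagonal resolution $\cL_\bullet$. This identification is part of the standard construction of (\ref{res-regsheaf}) recalled above, and it is the only thing that needs to be spelled out carefully; I expect no genuine obstacle beyond this bookkeeping, since after it every compatibility with $j$ is automatic, all maps in sight being the image of a single functor applied to one fixed complex tensored with $p_2^*j$.
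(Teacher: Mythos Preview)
Your proof is correct and takes a genuinely different route from the paper's. The paper argues inductively: it splits both regularity resolutions into short exact sequences $0\to\cK_{p+1}\to H^0(\cG\otimes\Omega^p(p))\otimes\cO(-p)\to\cK_p\to 0$ (and similarly $\cH_\bullet$ for $\cF$), and at each step lifts the already-constructed injection $i_p\colon\cK_p\hookrightarrow\cH_p$ to a map $\alpha_p$ between the middle trivial bundles using $\Ext^1(\cO(-p),\cH_{p+1})=H^1(\cH_{p+1}(p))=0$; injectivity of $\alpha_p$ is then argued by observing that a map between trivial bundles has trivial kernel, which would give nonzero sections of $\cK_{p+1}$, a contradiction. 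Your argument bypasses the induction entirely by invoking functoriality of the Beilinson construction: since the whole resolution is $p_{1*}(\cL_\bullet\otimes p_2^*\cF)$ with $\cL_\bullet$ fixed, the map $j$ induces a morphism of the full complexes in one stroke, with termwise maps equal to $H^0(\Omega^p(p)\otimes j)\otimes\cO(-p)$.

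Your approach buys a cleaner identification of the vertical maps with the inclusions of global sections---this is immediate from the description of the termwise map as $H^0(\Omega^p(p)\otimes j)$---whereas in the paper the lifts $\alpha_p$ are produced abstractly from an $\Ext^1$-vanishing and their coincidence with the natural inclusions is left implicit (it holds because $H^0(i_p)\otimes\id$ is itself such a lift). The paper's approach, in exchange, is more self-contained: it stays within iterated syzygies and avoids the product-space and derived-pushforward machinery. Your closing caveat about matching the differentials of (\ref{res-regsheaf}) with $p_{1*}d_{\cL}$ is honest but not a real worry here, since the paper explicitly builds (\ref{res-regsheaf}) via Beilinson and describes its differentials through the identity section $\sum v_i\otimes v_i^*$.
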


\begin{proof}
    We take the commutativity diagram
    $$
    \xymatrix{
    0 \ar[r] & \mathcal{K}_1 \ar[d]_{i_1} \ar[r] & H^0(\cG) \otimes \cO_{\PP^n} \ar[d]^{\alpha_0} \ar[r]^-{ev_G} & \cG \ar@{^{(}->}[d]^i \ar[r] & 0\\
    0 \ar[r] & \mathcal{H}_1 \ar[r] & H^0(\cF) \otimes \cO_{\PP^n} \ar[r]^-{ev_F} & \cF  \ar[r] & 0\\
    }
    $$
    obtained from the injective map $i$. Indeed, $ev_G \circ i$ lifts to the morphism $\alpha_0$ since $\Ext^1(\cO_{\PP^n},\mathcal{H}_1)= 0$. Furthermore, the map $\alpha_0$ is injective. If it was not, its kernel would be given by copies of $\cO_{\PP^n}$ that inject in $\mathcal{K}_1$, making them a direct summand of this last vector bundle, which is a contradiction. Thierefore, $i_1$ is injective as well.

    Iterating the previous construction, we have a commutative diagram
    $$
    \xymatrix{
    0 \ar[r] & \mathcal{K}_{p+1} \ar[d]_{i_{p+1}} \ar[r] & H^0(\cG \otimes \Omega^p(p)) \otimes \cO_{\PP^n}(-p) \ar[d]^{\alpha_p} \ar[r] & \mathcal{K}_{p} \ar@{^{(}->}[d]^{i_p} \ar[r] & 0\\
    0 \ar[r] & \mathcal{H}_{p+1} \ar[r] & H^0(\cF \otimes \Omega^p(p)) \otimes \cO_{\PP^n}(-p) \ar[r] & \mathcal{H}_p  \ar[r] & 0\\
    }
    $$
    given by the regularity of $\mathcal{H}_{p+1}(p+1)$. As before, the map $\alpha_p$ is injective, and consequently $i_{p+1}$ as well.
 This proves our result.
\end{proof}

Consider now a regular bundle $\cF$ such that $H^0(\cF(-a)) \neq 0$ with $a \geq 2$. Due to Lemma \ref{lemma-ggbundles} and Diagram (\ref{diag-regcontraction}), we have the commutative diagram
$$
\xymatrix{
H^0(\cF \otimes \Omega^p(p)) \otimes \cO_{\PP^n}(-p) \ar[d]_\simeq  \ar[rd] \\ \left(H^0(\Omega^p(p+a)) \otimes \cO_{\PP^n}(-p)\right) \oplus \coker \alpha_p \ar[r] \ar[d] & H^0(\Omega^{p-1}(p)) \otimes \cO_{\PP^n}(-p) \ar[d] \\
\left(H^0(\Omega^{p-1}(p-1+a)) \otimes \cO_{\PP^n}(-p+1)\right) \oplus \coker \alpha_{p-1} \ar[r] & H^0(\Omega^{p-2}(p-1)) \otimes \cO_{\PP^n}(-p+1)\\
H^0(\cF \otimes \Omega^{p-1}(p-1)) \otimes \cO_{\PP^n}(-p+1) \ar[ur] \ar[u]^\simeq & 
}
$$
for any $p=1,\ldots,n$. The surjectivity of the horizontal maps is ensured if we fix, to define the required contraction, an element $\omega \in H^0(\cF)^*$ whose paired element, in the original vector space, belongs to $H^0(\cO_{\PP^n}(a))$ and defines a diagram of type
$$
\xymatrix{
H^0(\Omega^p(p+a)) \otimes \cO_{\PP^n}(-p) \ar@{->>}[r] \ar[d] & H^0(\Omega^{p-1}(p)) \otimes \cO_{\PP^n}(-p) \ar[d] \\
H^0(\Omega^{p-1}(p-1+a)) \otimes \cO_{\PP^n}(-p+1) \ar@{->>}[r] & H^0(\Omega^{p-2}(p-1)) \otimes \cO_{\PP^n}(-p+1).\\
}
$$
Indeed, the previous diagram is the one given by the maximal rank contraction relating the Eagon-Northcott and Koszul complexes, as detailed in the previous part of this section. Its surjectivity (in the horizontal maps) obviously implies the same ``horizontal'' surjectivity in Diagram (\ref{diag-regcontraction}), and a simple dimension computation proves therefore Proposition \ref{prop-regmatrix}.

%\subsection{Constant rank matrices associated to Horrocks-Mumford bundle}
%In this part we will show how, when given a specific example of vector bundle, we can exploit the proposed construction in its full potential. In particular, we will 

\begin{example}
We consider the Horrocks-Mumford bundle $\cH$, defined in \cite{HM}. Recall that it is an indecomposable rank 2 bundle on $\PP^4$, which can be defined as the cohomology of the following monad
$$
0 \longrightarrow \cO_{\PP^4}(-1)^{5} \longrightarrow \Omega^2_{\PP^4}(2)^{2} \longrightarrow\cO_{\PP^4}^{5} \longrightarrow 0.
$$
Its resolution can be computed as described in \cite{D} or can be obtained using Macaulay2, with the aid of the function {\tt beilinson}, as explained in \cite{DE}. Furthermore, we know that it is 5-regular and the linear resolution of $\cF := \cH(5)$, that can be also computed with Macaulay2, is given by
{\small
\begin{equation}\label{res-HM}
0 \longrightarrow \cO_{\PP^4}(-4)^{35} \longrightarrow \cO_{\PP^4}(-3)^{173} \longrightarrow \cO_{\PP^4}(-2)^{330} \longrightarrow \cO_{\PP^4}(-1)^{290} \longrightarrow \cO_{\PP^4}^{100} \longrightarrow \cF \longrightarrow 0
\end{equation}
}
We consider the generic (and therefore surjective) map 
$$
k^{100} \simeq H^0(\cF) \longrightarrow k^{33} \simeq H^0(\cO_{\PP_4}^{33}).
$$
This map, through a diagram chase, can be lifted to a map of chain complexes, between the resolution of $\cF$ and the direct sum of 33 copies of the Koszul complex. In particular we obtain the following commutative diagram
\begin{equation}\label{HM-diag}
\xymatrix{
& & H^0(\cF \otimes \Omega^2(2)) \otimes \cO_{\PP^4}(-2) \ar[r]^\simeq \ar[d] & H^0(\Omega^{1}(2)) \otimes \cO_{\PP^4}(-2)^{33} \ar[d]  \\
0 \ar[r] & W_1 \otimes \cO_{\PP^4}(-1) \ar[r] \ar[d]_M & H^0(\cF \otimes \Omega^{1}(1)) \otimes \cO_{\PP^4}(-1) \ar[d] \ar[r] & H^0(\cO_{\PP^4}(1)) \otimes \cO_{\PP^4}(-1)^{33} \ar[d] \ar[r] & 0\\
0 \ar[r] & W_0 \otimes \cO_{\PP^4} \ar[r] & H^0(\cF) \otimes \cO_{\PP^4} \ar[r] &  \cO_{\PP^4}^{33} \ar[r] & 0
}
\end{equation}
whose dimensions are given by 
$$
\xymatrix{
& 330 \ar[r] \ar[d] & 330 \ar[d]\\
 125\ar[d]_M \ar[r] & 290\ar[d] \ar[r] & 165 \ar[d]\\
 67 \ar[r] & 100 \ar[r] & 33 
}
$$
Thanks to the commutativity of the previous diagram, we determine the indecomposable  $(67 \times 125)$-matrix $M$, with linear entries and  has  constant  rank 65.
This construction can be tested by running the Macaulay2 code provided in Appendix B.
\end{example}

%\begin{remark}
 %   The obtained matrix is novel compared to those constructed in \cite{BFL} from the Horrocks-Mumford bundle. Specifically, our $(67 \times 125)$-matrix $M$, of rank $65$, is indecomposable (as can be proven using the results established in the next section). Moreover, a straightforward size computation confirms that it cannot be obtained by ``assembling'' the matrices from \cite{BFL} as blocks.
%\end{remark}

\section{The moduli space}\label{sec-moduli}

In this last section, we study families of matrices $M$ of constant rank in terms of the corresponding vector bundles $\cE_M$ and their moduli spaces.
Denote by $\Spl(r;c_1,\cdots ,c_n)$ the moduli space of simple sheaves on $\PP^n$ of rank $r$ and given Chern classes $c_i$. 

 Using  recent results on generalized syzygy bundles, we will  prove that, under some mild hypothesis, the rank $r$ vector bundles $\cE_M$  are simple, unobstructed (i.e., the corresponding points in the moduli space are smooth) and they are parametrized by an open dense subset of an irreducible component of   $\Spl(r;c_1(\cE_M),\cdots ,c_n(\cE_M))$.

It is important to point out that we have to consider the moduli space of simple sheaves  $\Spl(r;c_1(\cE_M),\cdots ,c_n(\cE_M))$, instead of the better  understood moduli space of (semi)stable sheaves $M^{ss}(r;c_1(\cE_M),\cdots ,c_n(\cE_M)))$, because in general $\cE_M$ is neither stable nor semistable while quite often it is simple.

Let us start by collecting, for
the reader’s convenience, some basic information about moduli spaces of simple sheaves. For more information the reader can look at \cite{FM}.

Let $\cF$ be a globally generated vector bundle of rank $r$ on $\PP^n$. For any  general subspace $W\subset H^0(\cF)$ with $w=\dim W\ge n+r$, we define a vector bundle $\cS$ on $\PP^n$ as the kernel  of the map $eval _W:W\otimes \cO_{\PP^n}\lra \cF$. So, $\cS$   fits inside a short exact sequence:
\begin{equation}
    \label{exact1}
e: \quad 0\to  \cS\to W\otimes \mathcal O_{\PP^n} \to  \cF\to 0, \text{ and }
\end{equation}
$$\begin{array}{rcl} \rk(\cS)& = & \dim W- \rk(\cF)= w-r, \\
c_1(\cS) & = & -c_1(\cF) \\
c_t(\cS)\cdot c_t(\cF)& = & 1.
\end{array}
$$
We will call  $\cS$ the \textit{generalized syzygy bundle} associated to the couple $(\cF,W)$. Observe that the dual of a generalized syzygy bundle associated to a non complete linear system $W\subset H^0(\cO_{\PP^n}(d))$ is nothing but a Drézet bundle and, if $W=H^0(\PP^n,\cO _{\PP^n}(d))$, we recover the classical definition of syzygy bundle.

 We will denote by $$U:=U(r;c_1,\cdots ,c_n)\subset  \Spl(r; c_1,\cdots ,c_n)$$ the open locus parametrizing  globally generated rank $r$ simple vector bundles $\cF$ on $\PP^n$ such that $H^1(\cF)=H^1(\cF^\lor)=H^2(\cF^\lor)=0$.
 
 \begin{remark} \label{cte}
 By \cite[Proposition 3.2]{FM}, we know that the set-theoretic function associating to each point $[\cF] $ of this open locus $U$ the dimension of $H^0(\cF)$ is locally constant. Hence it is constant on every connected component of $U$.    
 \end{remark}

 From now on, we assume that the above open subset $U$ is connected; otherwise, we will restrict our discussion to each connected component.
 Define
$$\cG _U:=\{(\cF,W) \mid \cF\in U, W\subset H^0(\cF), \dim (W)=w\}.$$
The natural projection $\pi: \cG_U\to U$ is a Grassmann bundle and  $\cG_U$ is a smooth algebraic space of dimension $$\dim \cG_U=\dim \Spl(r;c_1,\cdots ,c_n)+ w(z-w)$$ where $z:=\dim H^0(\PP^n,\cF)$ (see Remark \ref{cte}). 
Moreover, consider 
$$\cG^0 _U:=\{(\cF,W)\in \cG _U  \mid eval_W \text{ is surjective}\},$$
which is open in $\cG _U$. We still denote by $\pi $ the induced natural projection $\pi : \cG^0_U \to U$.

 It is possible to check that, for any pair $(\cF,W)\in \cG^0_U$, the associated generalized syzygy bundle is simple and, using stack theory, we can define  a morphism (see \cite[Definition 3.7]{FM} for details)
 $$\alpha:\cG^0_U\lra \Spl(\rk(\cS);c_1(\cS),\cdots ,c_n(\cS))$$ which extends the set-theoretic map $(\cF,W)\mapsto \cS$.\\
It is known that:
\begin{itemize}
    \item[(i)]
 The morphism $\alpha$ is injective and, if $n\ge 3$, it is an open embedding \cite[Theorem  1.2, Proposition 4.1]{FM};
\item[(ii)] We have $$T_{[(\cF,W)]}\cG^0_U=H^0(\cS^\lor\otimes V)/W\otimes W^*,$$ $$T_{[\cS]}\Spl(\rk(\cS);c_1(\cS),\cdots ,c_n(\cS))=\Ext^1(\cS,\cS)=H^1(\cS^\lor\otimes \cS)$$ and the differential map  $$d \alpha :T_{[(\cF,W)]}\cG^0_U\lra T_{[\cS]}\Spl(\rk(\cS);c_1(\cS),\cdots ,c_n(\cS))$$ is injective \cite[Proposition 4.2]{FM}.
 
\end{itemize}

We are now ready to state the main result of this section.

\begin{theorem}\label{moduli}  Let $M$ be a $(b\times a)$-matrix with linear entries and constant rank $r$, which fits in
$$
\xymatrix{
0 \ar[r] & \cK_M \ar[r] &  \cO_{\PP^n}^{a}\ar[rr]^\psi\ar[rd] &
& \cO_{\PP^n}(1)^{b}\ \ar[r] & \cC_M \ar[r] & 0.  \\
& & & \cE_M\ar[ru]& && \\
}
$$
Assume that $\cC_M$ is  simple and $H^1(\cC_M(-1))=H^1(\cC_M^\lor(1))=H^2(\cC_M^\lor(1))=0$. Then, $\cE_M$ is a simple vector bundle  parametrized by a smooth  open subset ${ \bf S}$ of $\Spl(r;c_1(\cE_M(-1)),\cdots ,c_n(\cE_M(-1)))$.   Moreover, $$\dim  {\bf S} =\dim _{[\cC_M(-1)]}\Spl(b-r;c_1(\cC_M(-1),\cdots ,c_n(\cC_M(-1)))+ b(h^0(\cC_M(-1))-b).$$   
\end{theorem}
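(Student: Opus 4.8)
The plan is to reduce the statement about $\cE_M$ to the general-syzygy-bundle machinery recalled above, applied to the globally generated bundle $\cF := \cC_M(-1)$. First I would observe that, twisting the defining diagram of $M$ by $\cO_{\PP^n}(-1)$, the cokernel bundle $\cF = \cC_M(-1)$ is globally generated and sits in the short exact sequence
$$
0 \longrightarrow \cE_M(-1) \longrightarrow \cO_{\PP^n}^{b} \longrightarrow \cF \longrightarrow 0,
$$
so that $\cE_M(-1)$ is exactly the generalized syzygy bundle associated to the couple $(\cF, W)$ with $W = H^0(\cO_{\PP^n}^{b}) \hookrightarrow H^0(\cF)$ of dimension $w = b$. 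Note here that the map $\cO_{\PP^n}^{b} \to \cF$ is precisely the evaluation on $W$, since it comes from the surjection $\cO_{\PP^n}(1)^{b} \to \cC_M$ twisted down. The hypotheses $\cC_M$ simple and $H^1(\cC_M(-1)) = H^1(\cC_M^\lor(1)) = H^2(\cC_M^\lor(1)) = 0$ say exactly that $\cF = \cC_M(-1)$ is a point of the open locus $U = U(b-r; c_1(\cF),\dots,c_n(\cF))$ described before the theorem (with the rank of $\cF$ being $b - r$), and the pair $(\cF, W)$ lies in $\cG^0_U$ because $eval_W$ is surjective by construction.

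Next I would invoke the properties of the morphism $\alpha : \cG^0_U \to \Spl(\rk(\cS); c_1(\cS),\dots,c_n(\cS))$ recalled in items (i) and (ii): since $\cS = \cE_M(-1)$ is the generalized syzygy bundle of $(\cF,W)$, it is simple (hence $\cE_M$ is simple, simplicity being invariant under twist), and by the remark preceding the theorem $\cE_M$ is then indecomposable and $M$ is indecomposable. For $n \geq 3$ the map $\alpha$ is an open embedding, so its image ${\bf S}$ is an open subset of $\Spl(r; c_1(\cE_M(-1)),\dots,c_n(\cE_M(-1)))$ parametrizing the bundles of the form $\cE_M(-1)$; smoothness of ${\bf S}$ at $[\cS]$ follows because $\cG^0_U$ is a smooth algebraic space (it is open in the Grassmann bundle $\cG_U$ over the smooth $U$) and $\alpha$ is an open embedding, so the point is unobstructed. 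For the dimension count, from $\dim \cG_U = \dim \Spl(b-r; c_1(\cF),\dots,c_n(\cF)) + w(z - w)$ with $z = h^0(\cF) = h^0(\cC_M(-1))$ and $w = b$, and since $\cG^0_U$ is open in $\cG_U$ and $\alpha$ is an open embedding, one gets
$$
\dim {\bf S} = \dim_{[\cC_M(-1)]}\Spl(b-r; c_1(\cC_M(-1)),\dots,c_n(\cC_M(-1))) + b\bigl(h^0(\cC_M(-1)) - b\bigr),
$$
which is the claimed formula. (One should also check $\dim W = b \geq n + \rk(\cF) = n + b - r$, i.e. $r \geq n$, which holds for the uniform bundles arising from constant rank matrices once one excludes the line bundle / (co)tangent cases; alternatively this is part of what is subsumed in $(\cF,W)\in\cG^0_U$.)

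The main obstacle I expect is verifying carefully that the pair $(\cF, W)$ genuinely lands in $\cG^0_U$ and that the density of ${\bf S}$ in an irreducible component — which the introduction advertises — actually follows: openness of $\alpha$ gives that ${\bf S}$ is open in $\Spl$, but to say it is \emph{dense in an irreducible component} one needs that $\cG^0_U$ (equivalently $U$, equivalently $\cG_U$) is irreducible, or at least that one restricts to a connected component as the text allows. The cleanest route is to note $\cG_U \to U$ is a Grassmann bundle, hence irreducible over each connected (and, by the smoothness, irreducible) component of $U$, $\cG^0_U$ is a dense open in it, and $\alpha$ maps it isomorphically onto the open ${\bf S}$; thus ${\bf S}$ is a dense open subset of the irreducible component $\overline{\alpha(\cG^0_U)}$ of $\Spl(r; c_1(\cE_M(-1)),\dots,c_n(\cE_M(-1)))$. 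A secondary point requiring care is the identification of Chern classes: $c_1(\cE_M(-1)) = -c_1(\cF)$ and the total Chern class relation $c_t(\cE_M(-1)) c_t(\cF) = 1$ must be used to match the two $\Spl$ spaces appearing in the statement, but this is the routine computation already recorded in the display following \eqref{exact1}.
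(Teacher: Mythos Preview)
Your proposal is correct and follows essentially the same approach as the paper's own proof: identify $\cF=\cC_M(-1)$, recognize $\cE_M(-1)$ as the generalized syzygy bundle of the pair $(\cF,W)$ with $\dim W=b$, and then invoke the results of \cite{FM} (simplicity of the syzygy bundle and the open embedding $\alpha$) together with the Grassmann-bundle dimension formula. The paper's proof is extremely terse---it simply points to the machinery and the open embedding---whereas you have filled in the details (the twist to get the short exact sequence, the verification that the hypotheses place $\cF$ in $U$, the smoothness and dimension count, and the caveats $n\ge 3$ and $r\ge n$); these checks are legitimate and the paper tacitly relies on them.
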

\begin{proof}    

Consider the open set $U$ and the pairs in $\cG^0_U$ as defined before. By \cite{FM}, the syzygy bundle associated to any pair $(\cF,W)\in \cG^0_U$ is simple. In particular, taking $\cF = \cC_M(-1)$, we obtain that $\cE_M(-1)$ (and therefore $\cE_M$) is simple. 

The result immediately follows from the fact that $\alpha$ is an open embedding.
\end{proof}

\begin{remark}
Arguing as in \cite{BMR} and \cite{E}, we deduce that  the number $n(r,c_1,c_2,...)$ of irreducible components of the moduli space $\Spl(r;c_1,\cdots ,c_n)$ increases when $c_2$ increases. Therefore, the simple vector bundles associated to a $b\times a$ matrix of constant rank $r$ only fill an irreducible component. It is worthwhile to point out that this component has nice properties: it is generically smooth and we know its dimension. 
\end{remark}

\begin{example}
(1) Since $\Omega _{\PP^5}^1(2)$ is  rigid, simple,  generated by global sections and $H^1(\Omega _{\PP^5}^1(2))= H^1(T_{\PP^5}(-2))=H^2(T_{\PP^5}(-2))=0$, as an application of Theorem \ref{moduli} we get that the indecomposable $14\times 14$ matrices, with entries linear forms in 6 variables and constant rank 9, are parametrized by a smooth variety of dimension 14.

\vskip 2mm
(2)  Since $\cO _{\PP^3}(3)$ is  rigid, simple,  generated by global sections and $H^1(\cO _{\PP^3}(3))=H^1(\cO _{\PP^3}(-3))=H^2(\cO _{\PP^3}(-3))=0$, by Theorem \ref{moduli} we get that the indecomposable  $25\times 15$ matrices, with entries linear forms in 4 variables and constant rank 14, are parametrized by an irreducible open subset of dimension 51 of $\Spl(14;-3,9,-27)$.
\end{example}

%%%%%%%%%%%%%%%%%%%%%%%%%%%%%%%%%%%%%

\section{Final comments and questions}\label{sec-openpb}

In this section, we list some questions and open problems stemming from this work.

\vskip 2mm

(1) {\bf Homogeneous matrices of constant rank.} We would like to construct $b\times a$ matrices $M_d$ with entries homogeneous  forms of degree $d$ in $R$ and constant rank $r$. Again this problem can be translated into a problem of vector bundles, since any such matrix defines a
morphism of sheaves $\psi: \cO_{\PP^n}^{a}\longrightarrow \cO_{\PP^n}(d)^{b}$. If the rank is constant, equal to $r$, the image of this
morphism is a vector bundle $\cE$ on $\PP^n$ of rank $r$.

We first observe that any matrix $M_1$, with linear entries and constant rank $r$, gives rise to a matrix $M_d$ of the same size, constant rank $r$ and entries homogeneous forms of degree $d$; it is enough to change $x_i$ by $x_i^d$, for $i=0,\cdots, n$.  Nevertheless, not all ``degree $d$ matrices'' of type $M_d$ come from ``linear matrices'' of type $M_1$ of the same size. Indeed, the $5\times 5$ matrices $M_1$ with linear entries in $R=k[x,y,z]$ and constant rank $4$ are parametrized by an open subset of $\PP^5$   while the $5\times 5$ matrices $M_2$ with quadratic  entries in $R=k[x,y,z]$ and constant rank $4$ are parametrized by an open subset of $\PP^{35}$.   

So, we are led to pose the following problem:
\begin{problem}
    (i) Given $n$ and $d$, to construct constant rank matrices $M_d$ with entries homogeneous forms of degree $d$ in $n+1$ variables.

    (ii) Given $n$, $d$, $a$ and $b$, to determine the minimum $r$ (if it exists) such that there are $(b\times a)$-matrices with entries homogeneous forms of degree $d$ in $n+1$ variables and constant rank $r$.
\end{problem}
\vskip 2mm

(2) {\bf Stability of vector bundles associated to matrices with constant rank.} In this work we often use the following recipe to construct matrices with linear entries and constant rank.
We start with a vector bundle $\cC_M$ on $\PP^n$ with a linear resolution
$$
0 \longrightarrow  \cO_{\PP^n}(-n)^{a_n} \longrightarrow  \cO_{\PP^n}(-n+1)^{a_{n-1}} \longrightarrow  
\cdots  
$$
$$\cdots \longrightarrow  \cO_{\PP^n}(-1)^{a_1} \longrightarrow H^0(\cC_M(-1))\otimes \cO_{\PP^n} \longrightarrow \cC_M(-1) \longrightarrow 0.
$$
We then determine the minimal dimension vector subspace $ W \subset H^0(\cC_M(-1))$ that generates $\cC_M(-1)$ at every point and ensures a linear resolution of $W\otimes \cO_{\PP^n}\longrightarrow \cC_M(-1)\longrightarrow 0$.
Finally, we observe that if $\cC_M$ is simple then $\cE_M(-1):=ker(W\otimes \cO_{\PP^n}\rightarrow \cC_M(-1)) $ is also simple.\\
We are led to pose the following question:

\begin{question}
    Assume $\cC_M$ to be stable. Is there a choice for $W\subset H^0(\cC_M(-1))$ to ensure the stability of $\cE_M$?
\end{question}

\vskip 2mm

(3) Let $(X,L)$ be a polarized variety. As a generalization of the method developed in the previous sections we propose to address the same problem in a more general setup and construct matrices with entries in $H^0(\cO_X(L))$ of constant rank.

\vskip 2mm

(4) {\bf Maximal rank contraction.} In this paper we quite often deal with maximal rank contractions. More precisely, we take a vector space $V$ of dimension $n$, we consider the exterior algebra $\bigwedge V$, we pick a  $q$-form $\omega\in \bigwedge ^q V^*$ and we assume that it induces a maximal rank morphism $\omega\wedge -:\bigwedge ^pV\longrightarrow \bigwedge ^{p-q}V$ (see Definition \ref{def-contraction}). Nevertheless, we want to point out that in all the cases we  have considered it was enough to pick up a sufficiently general $q$-form to guarantee the maximal rank property. Therefore, based in Examples \ref{ex-maxrank} and \ref{ex-notmaxrank} we are led to pose the following problem.

\begin{question}\label{maxrk}
    Let $V$ be a vector space of dimension $n$.
    For which integers $p>q$ there exists a general $q$-form such that the induced contraction morphism 
    $$
    \omega\wedge -:\bigwedge ^pV\longrightarrow \bigwedge ^{p-q}V
    $$ 
    has maximal rank?
\end{question}

\begin{remark} Although the above question seems a simple problem of linear algebra, it has proven to be extremely elusive and the answer is not always affirmative.
Consider for instance  a vector space $V$ of dimension 9; hence $\dim \bigwedge ^3V = dim \bigwedge ^6V= 84$. We would expect that, for a general 3-form $\omega$, the induced contraction morphism 

     $$
    \omega\wedge -:\bigwedge ^3V\longrightarrow \bigwedge ^{6}V
    $$
    is an isomorphism. However, for a general 3-form $\omega $ we have  $$
    \dim Ker (\omega\wedge -:\bigwedge ^3V\longrightarrow \bigwedge ^{6}V)=4.
    $$

    As recalled in the introduction, Question 6.3 is equivalent to the long-standing problem of determining the Hilbert function of the exterior algebra modulo a principal ideal generated by a generic form of degree $q$. For more information about this problem, the reader can look at \cite{LN} and \cite{MS}.
\end{remark}

%%%%%%%%%%%%%%%%%%%%%%%%%%%%
\appendix
\section{The explicit construction of a ($15\times 25$)-matrix with linear entries and constant rank 14}\label{example-appendix}
In this part, we want to explicitly describe how to construct the matrix provided in Example \ref{ex-14}(2), namely an indecomposable ($15\times 25$)-matrix $M$ of constant rank 14 and entries linear forms in $k[x,y,z,t]$.
Recall that, in order to apply Theorem \ref{mainthm}, we have to provide the following commutative diagram
$$
\xymatrix{
0 \ar[r] & W_2 \otimes \cO_{\PP^3}(-2) \ar[r] \ar[d]_{\mathfrak{d}_2} & S_2 G^* \otimes \bigwedge^5 F \otimes \cO_{\PP^3}(-2) \ar[d]^{D_{5,2}} \ar[r]^{H_2} & \left(\bigwedge^2 V^*\right)^{5} \otimes \cO_{\PP^3}(-2) \ar[r] \ar[d] & 0 \\
0 \ar[r] & W_1 \otimes \cO_{\PP^3}(-1) \ar[d] \ar[r]^-{S_1} &  G^* \otimes \bigwedge^4 F \otimes \cO_{\PP^3}(-1) \ar[d]^{D_{4,1}} \ar[r]^{H_1} &  ((V^*)^{5}) \otimes \cO_{\PP^3}(-1) \ar[r] \ar[d] & 0 \\
0 \ar[r] & W_0 \otimes \cO_{\PP^3} \ar[r]^-{S_0} & \bigwedge^3 F \otimes \cO_{\PP^3} \ar[r]^{H_0} & \cO_{\PP^3}^{5}   & 
}
$$
where $H_1$ and $H_2$ have to be surjective.\\

Consider an element $\omega \in \bigwedge^{3}F^*$, which we express, in terms of the basis, as:
$$
\omega=\sum_{0\leq i < j < k \leq 5} \alpha_{(i,j,k)} f^*_i \wedge f^*_j \wedge f^*_k.
$$
The matrix $H_{(1,4,1)}^\omega$, representing the map
$$
G^* \otimes \bigwedge^{4}F  \otimes \cO_{\PP^3}(-1) \rightarrow  V^* \otimes \cO_{\PP^3}(-1),
$$
is constructed as follows. Consider the contraction $\bigwedge^{4}F \rightarrow F$, defined by $\omega$, represented by (the transposed of) the matrix
{\tiny
$$
P_{(\{4,1\})}^T = 
\left(
\begin{array}{cccccccccccccccccccccccccccccccc}
\alpha_{(1,2,3)} & -\alpha_{(0,2,3)} & \alpha_{(0,1,3)} & -\alpha_{(0,1,2)} & 0 & 0 \vspace{1mm}\\
\alpha_{(1,2,4)} & -\alpha_{(0,2,4)} & \alpha_{(0,1,4)} & 0 & -\alpha_{(0,1,2)} & 0\vspace{1mm}\\
\alpha_{(1,2,5)} & -\alpha_{(0,2,5)} & \alpha_{(0,1,5)} & 0 & 0 & -\alpha_{(0,1,2)}\vspace{1mm}\\
\alpha_{(1,3,4)} & -\alpha_{(0,3,4)} & 0 & \alpha_{(0,1,4)} & -\alpha_{(0,1,3)} & 0\vspace{1mm}\\
\alpha_{(1,3,5)} & -\alpha_{(0,3,5)} & 0 & \alpha_{(0,1,4)} & 0 & -\alpha_{(0,1,3)}\vspace{1mm}\\
\alpha_{(1,4,5)} & -\alpha_{(0,4,5)} & 0 & 0 & \alpha_{(0,1,5)} & -\alpha_{(0,1,4)} \vspace{1mm}\\
\alpha_{(2,3,4)} & 0 & -\alpha_{(0,3,4)} & \alpha_{(0,2,4)} & -\alpha_{(0,2,3)} & 0 \vspace{1mm}\\
\alpha_{(2,3,5)} & 0 & -\alpha_{(0,3,5)} & \alpha_{(0,2,5)} & 0 & -\alpha_{(0,2,3)} \vspace{1mm}\\
\alpha_{(2,4,5)} & 0 & -\alpha_{(0,4,5)} & 0 & \alpha_{(0,2,5)} & -\alpha_{(0,2,4)} \vspace{1mm}\\
\alpha_{(3,4,5)} & 0 & 0 & -\alpha_{(0,4,5)} & \alpha_{(0,3,5)} & -\alpha_{(0,3,4)} \vspace{1mm}\\
0 & \alpha_{(2,3,4)} & -\alpha_{(1,3,4)} & \alpha_{(1,2,4)} & -\alpha_{(1,2,3)} & 0 \vspace{1mm}\\
0 & \alpha_{(2,3,5)} &  -\alpha_{(1,3,5)} & \alpha_{(1,2,5)} & 0 & -\alpha_{(1,2,3)}\vspace{1mm}\\
0 & \alpha_{(2,4,5)} &  -\alpha_{(1,4,5)} & 0 & \alpha_{(1,2,5)} & -\alpha_{(1,2,4)}\vspace{1mm}\\
0 & \alpha_{(3,4,5)} & 0 & -\alpha_{(1,4,5)} & \alpha_{(1,3,5)} & -\alpha_{(1,3,4)} \vspace{1mm}\\
0 & 0 & \alpha_{(3,4,5)} & -\alpha_{(2,4,5)} & \alpha_{(2,3,5)} & -\alpha_{(2,3,4)} 
\end{array}
\right)
$$
}
from which we can induce the matrix representing $G^* \otimes \bigwedge^{4}F \rightarrow G^* \otimes F$ by considering
$$
P_{(\{4,1\},\{1,1\})} =
\left(
\begin{array}{c|c|c}
    P_{(\{4,1\})} & O & O  \vspace{1mm}\\
    \hline
    O & P_{(\{4,1\})} & O \vspace{1mm}\\
    \hline
    O & O & P_{(\{4,1\})}
\end{array}
\right)
$$
where $O$ denotes here a $(6 \times 15)$-matrix with all entries zero. Finally, we multiply the previous matrix with the one representing the map
$$
G^* \otimes F \longrightarrow V
$$
that is
{\tiny
$$
H_{1,1} = 
\left(
\begin{array}{ccccc}
    1 & 0 & 0 & 0\\ 
    0 & 1 & 0 & 0\\
    0 & 0 & 1 & 0\\
    0 & 0 & 0 & 1\\
    0 & 0 & 0 & 0\\
    0 & 0 & 0 & 0\\
    0 & 0 & 0 & 0\\
    1 & 0 & 0 & 0\\
    0 & 1 & 0 & 0\\
    0 & 0 & 1 & 0\\
    0 & 0 & 0 & 1\\
    0 & 0 & 0 & 0\\
    0 & 0 & 0 & 0\\
    0 & 0 & 0 & 0\\
    1 & 0 & 0 & 0\\
    0 & 1 & 0 & 0\\
    0 & 0 & 1 & 0\\
    0 & 0 & 0 & 1
\end{array}
\right)
$$
}
Choosing randomly the coefficients $\alpha_{(i,j,k)}$ of $\omega$ to obtain five generic elements in $\bigwedge^{3} F^*$, we get
$$
H_1 = 
\left(
\begin{array}{ccc}
H_{(1,4,1)}^{\omega_1} \vspace{1mm}\\
\hline
H_{(1,4,1)}^{\omega_2}\vspace{1mm}\\
\hline
H_{(1,4,1)}^{\omega_3}\vspace{1mm}\\
\hline
H_{(1,4,1)}^{\omega_4}\vspace{1mm}\\
\hline
H_{(1,4,1)}^{\omega_5}
\end{array}
\right).
$$
Again using Macaulay2, we can check that we have obtained a $(20 \times 45)-$matrix of maximal rank. We obtain $S_1$ directly as the syzygy matrix of $H_1$. Analogously, the matrix $H_0$ defining
$$
\bigwedge^3 F \otimes \cO_{\PP^3} \longrightarrow \left(\cO_{\PP^3}\right)^{5} 
$$
is obtained gluing the five matrices, each one obtained by the choice of a 3-form $\omega \in \bigwedge^3 F^*$, constructed as the $(1\times 20)$- matrix
$$
H^\omega_{(0,3,0)} = 
\left\{\alpha_{(i,j,k)}\right\}_{0\leq i < j < k \leq 5},
$$
with the entries ordered by the three-index. As before, let us denote by $S_0$ the syzygy matrix of $H_0$.
Following the definition described in (\ref{derivations-strand}), we build directly the matrix $D_{4,1}$ representing the map
$$
\delta_{4,1} : G^* \otimes \bigwedge^4 F \otimes \cO_{\PP^3}(-1) \longrightarrow \bigwedge^3 F \otimes \cO_{\PP^3},
$$ 
once chosen the canonical basis
$$
\left\{g_i \otimes (f_{j_1} \wedge f_{j_2} \wedge f_{j_3} \wedge f_{j_4})\right\}_{i=0,1,2 \atop 0 \leq j_1 < j_2 < j_3 < j_4 \leq 5} \mbox{ for } G^* \otimes \bigwedge^4 F
$$
and 
$$
\left\{f_{j_1} \wedge f_{j_2} \wedge f_{j_3})\right\}_{0 \leq j_1 < j_2 < j_3 \leq 5} \mbox{ for } \bigwedge^3 F.
$$

Notice that, with the matrix we have just defined, the product $H_0 \cdot Q \cdot W$ is equal to a zero matrix, giving evidence to the fact that the maps considered define a commutative diagram.
Before finding our \textit{final matrix}, let us show how to check that $H_2$ is also a surjective map. As before, we start by representing the contraction $\bigwedge^5 F \longrightarrow \bigwedge^2 F$, given by the element $\omega$, which gives the matrix
{\tiny
$$
P_{(\{5,2\})}=
\left(
\begin{array}{cccccccccccccccccccccccccccc}
     \alpha_{(2,3,4)} & \alpha_{(2,3,5)} & \alpha_{(2,4,5)} & \alpha_{(3,4,5)} & 0 & 0\\
     -\alpha_{(1,3,4)} & -\alpha_{(1,3,5)} & -\alpha_{(1,4,5)} & 0 & \alpha_{(3,4,5)} & 0\\
     \alpha_{(1,2,4)} & \alpha_{(1,2,5)} & 0  & -\alpha_{(1,4,5)} & -\alpha_{(2,4,5)} & 0\\
     -\alpha_{(1,2,3)} & 0 & \alpha_{(1,2,5)} & \alpha_{(1,3,5)} & \alpha_{(2,3,5)} & 0\\
     0 & -\alpha_{(1,2,3)} & -\alpha_{(1,2,4)} & -\alpha_{(1,3,4)} & -\alpha_{(2,3,4)} & 0\\
     \alpha_{(0,3,4)} & \alpha_{(0,3,5)} & \alpha_{(0,4,5)} & 0 & 0 & \alpha_{(3,4,5)}\\
    -\alpha_{(0,2,4)} & -\alpha_{(0,2,5)} & 0 & \alpha_{(0,4,5)} & 0 & -\alpha_{(2,4,5)}\\
    \alpha_{(0,2,3)} & 0 & -\alpha_{(0,2,5)} & -\alpha_{(0,3,5)} & 0 & \alpha_{(2,3,5)}\\
    0 & \alpha_{(0,2,3)} & \alpha_{(0,2,4)} & \alpha_{(0,3,4)} & 0 & -\alpha_{(2,3,4)}\\
    -\alpha_{(0,1,4)} & -\alpha_{(0,1,5)} & 0 & 0 & \alpha_{(0,4,5)} & \alpha_{(1,4,5)}\\
    \alpha_{(0,1,3)} & 0 & -\alpha_{(0,1,5)} & 0 & -\alpha_{(0,3,5)} & -\alpha_{(1,3,5)}\\
    0 & \alpha_{(0,1,3)} & \alpha_{(0,1,4)} & 0 & \alpha_{(0,3,4)} & \alpha_{(1,3,4)}\\
    -\alpha_{(0,1,2)} & 0 & 0 & -\alpha_{(0,1,5)} & -\alpha_{(0,2,5)} & -\alpha_{(1,2,5)}\\
    0 & -\alpha_{(0,1,2)} & 0 & \alpha_{(0,1,4)} & \alpha_{(0,2,4)} & \alpha_{(1,2,4)}\\
    0 & 0 & -\alpha_{(0,1,2)} & -\alpha_{(0,1,3)} & -\alpha_{(0,2,3)} & -\alpha_{(1,2,3)}
\end{array}
\right)
$$
}
from which we can induce the matrix representing $S_2 G^* \otimes \bigwedge^{5}F \rightarrow S_2 G^* \otimes \bigwedge^2 F$ by considering
$$
P_{(\{5,2\},\{2,2\})} =
\left(
\begin{array}{c|c|c|c|c|c}
    P_{(\{5,2\}} & O & O & O & O & O  \vspace{1mm}\\
    \hline
    O & P_{(\{5,2\}} & O & O & O & O\vspace{1mm}\\
    \hline
    O & O & P_{(\{5,2\}} & O & O & O\vspace{1mm}\\
    \hline
    O & O & O & P_{(\{5,2\}} & O & O\vspace{1mm}\\
    \hline
    O & O & O & O & P_{(\{5,2\}} & O\vspace{1mm}\\
    \hline
    O & O & O &O & O & P_{(\{5,2\}}
\end{array}
\right)
$$
Once again, we multiply the previous matrix by the (transposed of the) following one, associated to the map $S_2 G^* \otimes \bigwedge^2 F \longrightarrow \bigwedge^2 V^* $,
{\tiny
$$
H_{2,2}^T =
\left(
\begin{array}{cccccccccccccccccccccccccc}
    2 & 0 & 0 & 0 & 0 & 0 & 0 & 0 & 0 & 0 & 0 & 0 & 0 & 0 & 0\\
    0 & 2 & 0 & 0 & 0 & 0 & 0 & 0 & 0 & 0 & 0 & 0 & 0 & 0 & 0\\
    0 & 0 & 2 & 0 & 0 & 0 & 0 & 0 & 0 & 0 & 0 & 0 & 0 & 0 & 0\\
    0 & 0 & 0 & 0 & 0 & 2 & 0 & 0 & 0 & 0 & 0 & 0 & 0 & 0 & 0\\
    0 & 0 & 0 & 0 & 0 & 0 & 2 & 0 & 0 & 0 & 0 & 0 & 0 & 0 & 0\\
    0 & 0 & 0 & 0 & 0 & 0 & 0 & 0 & 0 & 2 & 0 & 0 & 0 & 0 & 0\\
    0 & 1 & 0 & 0 & 0 & 0 & 0 & 0 & 0 & 0 & 0 & 0 & 0 & 0 & 0\\
    0 & 0 & 1 & 0 & 0 & 1 & 0 & 0 & 0 & 0 & 0 & 0 & 0 & 0 & 0\\
    0 & 0 & 0 & 1 & 0 & 0 & 1 & 0 & 0 & 0 & 0 & 0 & 0 & 0 & 0\\
    0 & 0 & 0 & 0 & 0 & 0 & 1 & 0 & 0 & 0 & 0 & 0 & 0 & 0 & 0\\
    0 & 0 & 0 & 0 & 0 & 0 & 0 & 1 & 0 & 1 & 0 & 0 & 0 & 0 & 0\\
    0 & 0 & 0 & 0 & 0 & 0 & 0 & 0 & 0 & 0 & 1 & 0 & 0 & 0 & 0\\
    0 & 0 & 1 & 0 & 0 & -1 & 0 & 0 & 0 & 0 & 0 & 0 & 0 & 0 & 0\\
    0 & 0 & 0 & 1 & 0 & 0 & 0 & 0 & 0 & -1 & 0 & 0 & 0 & 0 & 0\\
    0 & 0 & 0 & 0 & 1 & 0 & 0 & 0 & 0 & 1 & 0 & 0 & 0 & 0 & 0\\
    0 & 0 & 0 & 0 & 0 & 0 & 0 & 1 & 0 & 0 & 0 & 0 & 0 & 0 & 0\\
    0 & 0 & 0 & 0 & 0 & 0 & 0 & 0 & 1 & 0 & 0 & 0 & 0 & 0 & 0\\
    0 & 0 & 0 & 0 & 0 & 0 & 0 & 0 & 0 & 0 & 0 & -1 & 1 & 0 & 0\\
    0 & 0 & 0 & 0 & 0 & 2 & 0 & 0 & 0 & 0 & 0 & 0 & 0 & 0 & 0\\
    0 & 0 & 0 & 0 & 0 & 0 & 2 & 0 & 0 & 0 & 0 & 0 & 0 & 0 & 0\\
    0 & 0 & 0 & 0 & 0 & 0 & 0 & 2 & 0 & 0 & 0 & 0 & 0 & 0 & 0\\
    0 & 0 & 0 & 0 & 0 & 0 & 0 & 0 & 0 & 2 & 0 & 0 & 0 & 0 & 0\\
    0 & 0 & 0 & 0 & 0 & 0 & 0 & 0 & 0 & 0 & 2 & 0 & 0 & 0 & 0\\
    0 & 0 & 0 & 0 & 0 & 0 & 0 & 0 & 0 & 0 & 0 & 0 & 2 & 0 & 0\\
    0 & 0 & 0 & 0 & 0 & 0 & 1 & 0 & 0 & 0 & 0 & 0 & 0 & 0 & 0\\
    0 & 0 & 0 & 0 & 0 & 0 & 0 & 1 & 0 & 1 & 0 & 0 & 0 & 0 & 0\\
    0 & 0 & 0 & 0 & 0 & 0 & 0 & 0 & 1 & 0 & 1 & 0 & 0 & 0 & 0\\
    0 & 0 & 0 & 0 & 0 & 0 & 0 & 0 & 0 & 0 & 1 & 0 & 0 & 0 & 0\\
    0 & 0 & 0 & 0 & 0 & 0 & 0 & 0 & 0 & 0 & 0 & 1 & 1 & 0 & 0\\
    0 & 0 & 0 & 0 & 0 & 0 & 0 & 0 & 0 & 0 & 0 & 0 & 0 & 1 & 0\\
    0 & 0 & 0 & 0 & 0 & 0 & 0 & 0 & 0 & 2 & 0 & 0 & 0 & 0 & 0\\
    0 & 0 & 0 & 0 & 0 & 0 & 0 & 0 & 0 & 0 & 2 & 0 & 0 & 0 & 0\\
    0 & 0 & 0 & 0 & 0 & 0 & 0 & 0 & 0 & 0 & 0 & 2 & 0 & 0 & 0\\
    0 & 0 & 0 & 0 & 0 & 0 & 0 & 0 & 0 & 0 & 0 & 0 & 2 & 0 & 0\\    
    0 & 0 & 0 & 0 & 0 & 0 & 0 & 0 & 0 & 0 & 0 & 0 & 0 & 2 & 0\\
    0 & 0 & 0 & 0 & 0 & 0 & 0 & 0 & 0 & 0 & 0 & 0 & 0 & 0 & 2
    \end{array}
\right)
$$
}
Once more, the random choice of the coefficients $\alpha_{(i,j,k)}$ of $\omega$ allows us to get
$$
H_2 = 
\left(
\begin{array}{ccc}
H_{(2,5,2)}^{\omega_1} \vspace{1mm}\\
\hline
H_{(2,5,2)}^{\omega_2}\vspace{1mm}\\
\hline
H_{(2,5,2)}^{\omega_3}\vspace{1mm}\\
\hline
H_{(2,5,2)}^{\omega_4}\vspace{1mm}\\
\hline
H_{(2,5,2)}^{\omega_5}
\end{array}
\right)
$$
which, by Macaulay2, we can check to be a $(30 \times 46)-$matrix of maximal rank.
We are therefore in the hypotheses of Theorem \ref{mainthm}. This means that we can construct the matrix $M$ as follows:
being $S_0^T \cdot S_0$ an invertible $(15 \times 15)$ square matrix and, by the commutativity of the resolutions, having $S_0 \cdot M = D_{4,1} \cdot S_1$, we can compute the desired matrix as
$$
M = (S_0^T \cdot S_0)^{-1} \cdot S_0^T \cdot D_{4,1} \cdot S_1.
$$

\section{$(67 \times 125)$  matrix of linear forms and  constant rank 65  through the Horrocks Mumford bundle}

\begin{verbatim}
S = ZZ/32003[x_0..x_4];
E = ZZ/32003[e_0..e_4, SkewCommutative=>true];

alphad = map(E^5,E^{2:-2},{{e_4*e_1,e_2*e_3},{e_0*e_2,e_3*e_4},
{e_1*e_3,e_4*e_0},{e_2*e_4,e_0*e_1},{e_3*e_0,e_1*e_2}})
alpha = syz alphad
alphad' = beilinson(alphad,S)
alpha' = beilinson(alpha,S)
H = prune homology(alphad',alpha');
betti res H
regularity H
F = truncate(5,H)
betti res F
C = res F
C1 =C.dd_1;
Mnew = coker C1;
I = matrix{{x_0,x_1,x_2,x_3,x_4}}
N = coker I
P = res N^33;
A = random(S^33,S^100) 
f = inducedMap(N^33,Mnew,A)
H =res f;
rank H_1, rank H_2

W0 = syz A;
W1 = syz H_1;
W0t = transpose W0;
Winv = inverse(W0t*W0);
M = Winv*W0t*C1*W1;
\end{verbatim}

\end{document}